\numberwithin{equation}{section}
\theoremstyle{plain}
\newtheorem{theorem}{Theorem}[section]
\newtheorem{lemma}{Lemma}[section]
\newtheorem{proposition}{Proposition}[section]
\newtheorem{corollary}{Corollary}[section]
\theoremstyle{definition} 
\newtheorem{example}{Example}[section]
\newtheorem{remark}{Remark}[section]
\newtheorem{definition}{Definition}[section]
\newtheorem{assumption}{Assumption}[section]
\newenvironment{mylist}[1]
	{\begin{list}{}
		{\settowidth{\labelwidth}{#1}
		 \setlength{\leftmargin}{\labelwidth}
		 \addtolength{\leftmargin}{\labelsep}
		 }}
	{\end{list}}
\newcommand\tsum{\textstyle\sum\nolimits}
\newcommand{\cZ}{{\mathfrak Z}}
\newcommand{\cC}{{\mathfrak C}}
\newcommand{\cN}{{\mathfrak N}}
\newcommand{\N}{{\cal N}}
\newcommand{\T}{{\cal T}}
\newcommand{\var}{{\rm Var}}
\newcommand{\cov}{{\rm Cov}}
\newcommand{\V}{{\cal V}}
\newcommand{\F}{{\cal F}}
\newcommand{\C}{{\cal C}}
\newcommand{\I}{{\cal I}}
\newcommand{\X}{{\cal X}}
\newcommand{\LL}{{\cal L}}
\newcommand{\half}{ \mbox{\small$\frac{1}{2}$}}
\newcommand{\be}{\begin{equation}}
\newcommand{\ee}{\end{equation}}
\newcommand{\bgamma}{\mbox{\boldmath$\gamma$}}
\def\w{\omega}
\def\e{\varepsilon}
\def\vv{\vartheta}
\def\dst {{\rightsquigarrow}}
\def\bbr{{\Bbb{R}}} 
\def\bbe{{\Bbb{E}}} 
\begin{document}
\title{\vspace{20pt}
Minimax asymptotics\,%
\thanks{%
The first author thanks
the Research Council of Finland,
Foundation for the Advancement of Finnish Securities Markets,
and OP Group Research Foundation
for financial support.
The second author thanks
Air Force Office of Scientific Research (AFOSR),
  Grant FA9550-22-1-0244.
Contact addresses:
Mika Meitz,
Department of Economics,
University of Helsinki,
P. O. Box 17, FI\textendash 00014 University of Helsinki, Finland;
e-mail: mika.meitz@helsinki.fi.
Alexander Shapiro,
School of Industrial and Systems Engineering,
Georgia Institute of Technology,
Atlanta, Georgia 30332, United States;
e-mail: ashapiro@isye.gatech.edu.%
}\vspace{20pt}
}
\author{%
Mika Meitz\\\small{University of Helsinki}
\and
Alexander Shapiro\\\small{Georgia Institute of Technology}
\vspace{20pt}
}
\date{April 2025}
\maketitle
\begin{abstract}
\noindent%
In this paper, we consider asymptotics
of the optimal value and the optimal solutions of parametric minimax estimation problems.
Specifically, we consider estimators of the optimal value and the optimal solutions in a sample minimax problem that approximates the true population problem and study the limiting distributions of these estimators as the sample size tends to infinity.
The main technical tool we employ in our analysis is the theory of sensitivity analysis of parameterized mathematical optimization problems.
Our results go well beyond the existing literature and show that these limiting distributions are highly non-Gaussian in general and normal in simple specific cases.
These results open up the way for the development of statistical inference methods in parametric minimax problems.

\bigskip{}
\bigskip{}
\bigskip{}

\noindent\textbf{MSC2020 subject classifications:} 
Primary 62F12; secondary 90C47.

\bigskip{}

\noindent \textbf{Keywords:}
Minimax problem,
Optimal value,
Optimal solutions,
Asymptotics,
Limiting distribution,
Statistical inference,
Mathematical optimization problem,
Sensitivity analysis.

\end{abstract}
\vfill{}

\pagebreak{}


\section{Introduction}

There exists a vast literature on general minimax problems
originating nearly a century ago with the seminal contribution of
\citet{vonneumann1928theorie}.
Early books on different aspects of minimax problems include
\citet{vonneumann1944theory},
\citet{danskin1967theory},
and
\citet{demyanov1974introduction}.
Modern discussions of minimax problems can be found, for instance, in the books of
\citet{myerson1991game} on game theory,
\citet{bertsekas2003convex} on optimization theory,
and \citet{SDR} on stochastic programming.
More recently, there has been a surge of interest in minimax problems in computer science and machine learning; see, for instance,
the Nevanlinna Prize lecture of
\citet{daskalakis2018equilibria}
or the articles of
\citet{jin2020what},
\citet{diakonikolas2021efficient}, and
\citet{Meitz2024SJS}
for further references and discussion.

In this paper, we consider the canonical formulation of a stochastic minimax problem given by
\begin{equation}\label{intro-minimax-pop}
 \min_{\gamma\in \Gamma}
 \biggl\{ \sup_{\xi\in \Xi} f(\gamma,\xi) \biggr\}
\quad\text{with}\quad
f(\gamma,\xi):=\bbe[F(X,\gamma,\xi)],
\end{equation}
where $\Gamma\subset \bbr^n$ and $\Xi\subset \bbr^m$ are (nonempty) {\em compact } sets, $X\in \X\subset  \bbr^d$ is a random vector, and $F:  \X\times\Gamma\times  \Xi\to \bbr$ is a continuous function.
This formulation is in line with the references cited above.
Given an IID random sample $X_1,\ldots
,X_N$ of $X$, the empirical counterpart of problem \eqref{intro-minimax-pop} is the sample minimax problem
\begin{equation}\label{intro-minimax-sam}
 \min_{\gamma\in \Gamma} \biggl\{ \sup_{\xi\in \Xi} \hat{f}_N(\gamma,\xi)\biggr\}
\quad\text{with}\quad
\hat{f}_N(\gamma,\xi):=N^{-1}\sum_{j=1}^N F(X_j,\gamma,\xi).
\end{equation}
We denote the optimal values of problems \eqref{intro-minimax-pop} and \eqref{intro-minimax-sam} by
\[
\vv^*:=\inf_{\gamma\in \Gamma} \sup_{\xi\in \Xi} f(\gamma,\xi)
\qquad\text{and}\qquad
\hat{\vv}_N:=\inf_{\gamma\in \Gamma} \sup_{\xi\in \Xi} \hat{f}_N(\gamma,\xi),
\]
respectively.
Furthermore, the respective sets of optimal $\gamma$- and $\xi$-solutions to the population and sample minimax problems are denoted by
\begin{alignat}{2}
\notag
\Gamma^* &:=& \arg\min_{\gamma\in \Gamma} \sup_{\xi\in \Xi} f(\gamma,\xi),
\qquad
\Xi^*(\gamma)&:=\arg\max_{\xi\in \Xi}f(\gamma,\xi), \\
\notag
\widehat{\Gamma}_N &:=& \arg\min_{\gamma\in \Gamma} \sup_{\xi\in \Xi} \hat{f}_N(\gamma,\xi),
\qquad
\widehat{\Xi}_N(\gamma)&:=\arg\max_{\xi\in \Xi} \hat{f}_N (\gamma,\xi).
\end{alignat}
The aim of this article is to study the asymptotic behavior,
in particular the limiting distribution,
of the optimal value $\hat{\vv}_N$ and the optimal $\gamma$-solutions, say $\hat{\gamma}_N \in \widehat{\Gamma}_N$, as the sample size $N$ tends to infinity.

Regarding previous literature,
consistency results for optimal values and optimal solutions are well-known.
These results are documented for instance in
\citet{SDR} where it is shown that,
with probability one,
$\hat{\vv}_N$ tends to $\vv^*$ and  the distance from  $\hat{\gamma}_N$ to $\Gamma^*$ tends to zero for any
$\hat{\gamma}_N \in \widehat{\Gamma}_N$.
For details of these consistency results, see \citet[Theorem 5.9]{SDR},
and for slight refinements, see \citet[Theorem 1]{Meitz2024SJS}.

Much less and surprisingly little is known about the
limiting distributions of $\hat{\vv}_N$ and $\hat{\gamma}_N$,   and the present paper aims to address this gap in the literature.
Asymptotics of the optimal value $\hat{\vv}_N$
have, to the best of our knowledge, been discussed in one paper,
namely \citet{sha08}.
He considered a convex-concave special case in which the sets $\Gamma$ and $\Xi$ are convex and
$f(\cdot,\xi)$ is convex for all fixed $\xi\in\Xi$ and $f(\gamma,\cdot)$ is concave for all fixed $\gamma\in\Gamma$,
and in his Theorem 3.1 characterized the limiting distribution of
$\hat{\vv}_N$. This result is also reported in \citet[Theorem 5.10]{SDR}.
As for the limiting distribution of optimal solutions, as far as we know, only two papers exist that in any way address this issue.
\cite{Biau2020AS} consider a specific formulation of the minimax problem corresponding to so-called generative adversarial networks.
Assuming that a single unique solution to their specific minimax problem exists,
their Theorem 4.3 gives a result on asymptotic normality of
the optimal $\gamma$-solutions.
\cite{Meitz2024SJS} does not consider asymptotics of optimal solutions per se, but in his Theorem 2 suggests a subsampling-based procedure for forming confidence sets for the optimal solutions.
It seems that for general minimax problems,
results regarding the asymptotics of optimal solutions have not yet appeared in the literature.

In this paper, we go well beyond these existing results in
characterizing the limiting distributions of the optimal values $\hat{\vv}_N$ and the optimal solutions $\hat{\gamma}_N$.
The main technical tool of our analysis is the modern theory of sensitivity
analysis of parameterized mathematical optimization problems as laid out in \citet{BS}.
Early antecedents of this approach (applied, e.g., to minimization problems with equality and inequality constraints) include \citet{shap1989, shap1991, shap1993} and \citet{KR1993}.
Our use of this theory makes the present paper quite different from the
previous works of
\citet{sha08}, \citet{Biau2020AS}, and \citet{Meitz2024SJS}
who employed other techniques to obtain their results.

In our Section \ref{sec-value} we consider asymptotics of the optimal value $\hat{\vv}_N$. A first step in our analysis is to write the minimax problem \eqref{intro-minimax-pop} as an equivalent semi-infinite mathematical programming problem. For such problems, a general theory of sensitivity analysis is provided in Section 5.4.4 of \citet{BS}. Making use of these results, first-order properties of the minimax problem, and the functional delta method we are able to express the limiting distribution of the optimal value in two scenarios that are not convex-concave and that complement the earlier results obtained by \citet{sha08} in the convex-concave case.
No restrictions are imposed on the number of $\gamma$- and $\xi$-solutions.

Section \ref{sec-sol} considers the limiting distribution of optimal solutions $\hat{\gamma}_N$. This is a more delicate issue and relies on second-order properties of the minimax problem.
We assume the $\gamma$-parameter is point identified, i.e. $\Gamma^* = \{\gamma^*\}$, but allow for a finite number of $\xi$-solutions.
This allows us to apply the so-called reduction method used in the second-order analysis of semi-infinite programs (cf., \citealp{HK1993})
and reformulate the original minimax problem as a nonlinear programming problem with a finite number of inequality constraints.
Careful analysis of this problem leads to our main result, Theorem \ref{th-gamma}, characterizing the limiting distribution of $\hat{\gamma}_N$.

Our paper takes a definitive step forward in characterizing the limiting distributions of optimal values $\hat{\vv}_N$ and optimal solutions $\hat{\gamma}_N$ in parametric minimax estimation problems.
These limiting distributions are highly non-Gaussian in general and reduce to the normal distribution only when restrictive simplifying assumptions are imposed.
Our results also open up the possibility of performing statistical inference in minimax estimation problems. In particular, evaluation of approximate errors of these estimators, construction of confidence intervals, and forming appropriate critical regions for hypothesis tests would be of interest in practical applications.

It may be clarifying to note that minimax-type formulations have appeared in the literature also in other contexts, for instance in
(generalized) empirical likelihood estimation
(see, e.g., \citealp{owen2001} or \citealp{neweysmith2004})
and in distributionally robust optimization
(see, e.g., \citealp{delageye2010}).
These minimax problems are somewhat different from the one considered in this paper.

We use the following terminology and notations through the paper. We denote by $C(\Gamma \times \Xi)$ the space of continuous functions $\phi:\Gamma\times \Xi \to \bbr$ equipped with the sup-norm $\|\phi\|=\sup_{(\gamma,\xi)\in \Gamma\times \Xi}|\phi(\gamma,\xi)|$ (recall that the sets $\Gamma$ and $\Xi$ are assumed to be compact).
The symbol
$\dst$ denotes weak convergence and
$\delta_\xi$ the Dirac measure of mass one at $\xi$.
Section 2 considers asymptotics of the optimal value and relevant notions of directional differentiability will be defined in Section \ref{sec-dirdiff}.
Section 3 discusses asymptotics of the optimal solutions, and concluding remarks will be given in Section 4.

\setcounter{equation}{0}
\section{Asymptotics of the optimal value}
\label{sec-value}

Throughout the paper we consider the population and sample minimax problems \eqref{intro-minimax-pop} and \eqref{intro-minimax-sam}. We make the following basic assumption.

\begin{assumption} \label{ass-1}
There is a measurable function $\psi(X)$ such that $\bbe[\psi(X)^2]$ is finite and
\[
| F(X,\gamma,\xi)-F(X,\gamma',\xi') | 
\le \psi(X)(\|\gamma-\gamma'\|+\|\xi-\xi'\|)
\]
for all $\gamma,\gamma' \in \Gamma$, $\xi,\xi' \in \Xi$, and almost every $X$.
\end{assumption}

Under this assumption, the expected value function $f:\Gamma\times\Xi\to \bbr$ is Lipschitz continuous and the sets
$\Gamma^*$ and $\Xi^*(\gamma)$, $\gamma\in \Gamma$,
are nonempty  (recall that the sets $\Gamma$ and $\Xi$ are assumed to be compact).  Moreover,
$N^{1/2}(\hat{f}_N-f)$ converges in distribution to a mean zero Gaussian random element $\F(\gamma,\xi)$  of the space  $C(\Gamma\times \Xi)$ with covariance function
\begin{equation}\label{covar}
 \cov[\F(\gamma,\xi),\F(\gamma',\xi')] =
\cov[F(X,\gamma,\xi),F(X,\gamma',\xi')]
\end{equation}
(see, e.g., \citealp[Example 19.7]{vaart}).
This is denoted $N^{1/2}(\hat{f}_N-f)\dst  \F$.

We study asymptotics of the optimal value by 
first analyzing appropriate directional differentiability properties in Section \ref{sec-dirdiff}, 
and then convert these results into asymptotics in Section \ref{optval-asymptotics}.

\subsection{Directional differentiability of the optimal value function}
\label{sec-dirdiff}

To obtain our results, we next consider a minimax problem more general than the ones in
\eqref{intro-minimax-pop} and \eqref{intro-minimax-sam}.
Let the function
$\phi \in C ( \Gamma \times \Xi )$
be arbitrary, and consider the general optimal value function
\begin{equation}\label{optfunc}
V(\phi):=\inf_{\gamma\in \Gamma} \sup_{\xi\in \Xi} \phi(\gamma,\xi),
\end{equation}
depending on the function $\phi$.
Of course, $V(f) = \vv^*$ and $V(\hat{f}_N) = \hat{\vv}_N$.
Moreover, under the above assumptions, the function $V:C(\Gamma\times \Xi) \to \bbr$ is finite-valued and Lipschitz continuous (in the norm topology of the space $C ( \Gamma \times \Xi )$).
In what follows, we are interested in differentiability properties of the value function $V(\phi)$ in the vicinity of the point $\phi_0 = f$.
 It is said that $V(\cdot)$  is G\^ateaux directionally differentiable at $\phi \in C(\Gamma\times \Xi)$ if the directional derivative
\[
V'_\phi(\eta):=\lim_{t\downarrow 0}\frac{V(\phi+t\eta)-V(\phi)}{t}
\]
exists for every direction $\eta\in C(\Gamma\times \Xi)$.
Moreover, it is said that $V(\cdot)$  is Hadamard  directionally differentiable if
\[
V'_\phi(\eta)=\lim_{t\downarrow 0,\, \eta'\to \eta}\frac{V(\phi+t\eta')-V(\phi)}{t},
\]
where the convergence $\eta'\to \eta$ is understood in the norm topology of $C(\Gamma\times \Xi)$.

For a   set $K\subset  C(\Gamma\times \Xi) $, it is said that $V(\cdot)$  is Hadamard  directionally differentiable at $\phi$  {\em tangentially} to the set $K$  if the limit
\[
V'_\phi(\eta)=\lim_{t\downarrow 0,\,  \eta'\to_{_K} \eta}\frac{V(\phi+t\eta')-V(\phi)}{t}
\]
exists for any $\eta\in \T_K(\phi)$. Here  $\T_K(\phi)$   denotes the contingent
cone to $K$ at $\phi\in K$, and $ \eta'\to_{_K} \eta$ means that  $\eta'\to  \eta$ and $\phi+t\eta'\in K$. We   will be interested in the case where the set $K$ is convex and closed. In that case $\T_K(\phi)$ becomes the standard tangent cone to $K$ at $\phi$, given by the topological closure of the radial cone consisting of  $\eta$ such that  $\phi+t\eta\in K$ for all $t>0$ small enough.
Also note that since $V:C(\Gamma\times \Xi)\to \bbr$ is Lipschitz continuous, it follows that if $V(\cdot)$  is G\^ateaux directionally differentiable, then it is Hadamard  directionally differentiable and $V'_\phi(\cdot)$ is locally Lipschitz continuous.

We begin with the following  directional differentiability result for the optimal  value function $V$.

\begin{proposition}
\label{hdd-1}
Suppose the following condition holds: {\rm (i)} for every $\gamma^*\in \Gamma^*$ and every $\xi^*\in \Xi^*(\gamma^*)$, the point $\gamma^*$ is a local minimizer of the function $f(\gamma,\xi^*)$, $\gamma\in \Gamma$. Then
the optimal value function  $V(\cdot)$ is Hadamard directionally differentiable at $f$ and
\begin{equation}\label{deriv-a}
V'_f(\eta)=\min_{\gamma\in \Gamma^*}\sup_{\xi\in \Xi^*(\gamma)} \eta(\gamma,\xi).
\end{equation}
\end{proposition}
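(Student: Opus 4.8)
The plan is to establish the Gâteaux directional derivative formula $V'_f(\eta)=L$, where $L:=\min_{\gamma\in\Gamma^*}\sup_{\xi\in\Xi^*(\gamma)}\eta(\gamma,\xi)$, and then to invoke the Lipschitz continuity of $V$ recorded before the statement to upgrade Gâteaux to Hadamard directional differentiability. Throughout I would write $g_\phi(\gamma):=\sup_{\xi\in\Xi}\phi(\gamma,\xi)$, so that $V(\phi)=\inf_{\gamma\in\Gamma}g_\phi(\gamma)$ and $g_f(\gamma^*)=\vv^*$ for every $\gamma^*\in\Gamma^*$. The core of the argument is to prove the two one-sided bounds $\limsup_{t\downarrow0}t^{-1}(V(f+t\eta)-V(f))\le L$ and $\liminf_{t\downarrow0}t^{-1}(V(f+t\eta)-V(f))\ge L$ separately; since $V(f)=\vv^*$, together they force the directional limit to exist and to equal $L$.

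For the upper bound I would fix any $\gamma^*\in\Gamma^*$ and use $V(f+t\eta)\le g_{f+t\eta}(\gamma^*)$ together with $V(f)=g_f(\gamma^*)=\vv^*$, which reduces matters to differentiating $t\mapsto g_{f+t\eta}(\gamma^*)$ at $t=0$. This map is a supremum over the compact set $\Xi$ of functions that are affine in $t$, hence convex in $t$, and a standard max-function differentiation argument (Danskin's theorem) identifies its right derivative as $\sup_{\xi\in\Xi^*(\gamma^*)}\eta(\gamma^*,\xi)$. Therefore $\limsup_{t\downarrow0}t^{-1}(V(f+t\eta)-V(f))\le\sup_{\xi\in\Xi^*(\gamma^*)}\eta(\gamma^*,\xi)$, and minimizing the right-hand side over $\gamma^*\in\Gamma^*$ yields the desired bound by $L$. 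I note that this half uses neither condition (i) nor any information beyond $\Gamma^*\neq\emptyset$.

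The lower bound is the delicate half, and it is exactly here that condition (i) enters. Let $\gamma_t\in\arg\min_{\gamma}g_{f+t\eta}(\gamma)$, a minimizer existing by compactness of $\Gamma$ and continuity of $g_{f+t\eta}$. Since $\|g_{f+t\eta}-g_f\|\le t\|\eta\|\to0$ and $g_f$ is continuous, a routine consistency argument shows every cluster point of $\gamma_t$ as $t\downarrow0$ lies in $\Gamma^*$. I would take a sequence $t_k\downarrow0$ realizing the liminf and pass to a subsequence along which $\gamma_{t_k}\to\bar\gamma\in\Gamma^*$. Fixing an arbitrary $\xi'\in\Xi^*(\bar\gamma)$ and invoking condition (i), which makes $\bar\gamma$ a local minimizer of $f(\cdot,\xi')$, I get $f(\gamma_{t_k},\xi')\ge f(\bar\gamma,\xi')=\vv^*$ for all large $k$. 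Combining this with
\[
V(f+t_k\eta)=g_{f+t_k\eta}(\gamma_{t_k})\ge f(\gamma_{t_k},\xi')+t_k\,\eta(\gamma_{t_k},\xi')\ge \vv^*+t_k\,\eta(\gamma_{t_k},\xi')
\]
gives $t_k^{-1}(V(f+t_k\eta)-\vv^*)\ge\eta(\gamma_{t_k},\xi')\to\eta(\bar\gamma,\xi')$; taking the supremum over $\xi'\in\Xi^*(\bar\gamma)$ shows the liminf is at least $\sup_{\xi'\in\Xi^*(\bar\gamma)}\eta(\bar\gamma,\xi')\ge L$.

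The main obstacle is precisely this lower bound. The quantity $\sup_{\xi\in\Xi^*(\gamma)}\eta(\gamma,\xi)$ is in general only upper semicontinuous in $\gamma$, because the inner solution map $\Xi^*(\cdot)$ is upper but not lower semicontinuous; consequently one cannot pass to the limit in the naive estimate $t^{-1}(V(f+t\eta)-V(f))\ge\sup_{\xi\in\Xi^*(\gamma_{t})}\eta(\gamma_{t},\xi)$ evaluated along the moving minimizers $\gamma_{t}$. Condition (i) is exactly what circumvents this: by guaranteeing $f(\gamma_{t_k},\xi')\ge\vv^*$ for each \emph{fixed} $\xi'\in\Xi^*(\bar\gamma)$, it lets me first take the limit in $k$ and only afterward supremize over $\Xi^*(\bar\gamma)$, thereby avoiding any appeal to lower semicontinuity of $\gamma\mapsto\Xi^*(\gamma)$. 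With both bounds established the Gâteaux derivative equals $L$, and the Lipschitz property quoted before the proposition promotes this to Hadamard directional differentiability, completing the argument.
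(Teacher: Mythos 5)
Your proposal is correct and follows essentially the same route as the paper's proof: the same two one-sided bounds (the $\limsup$ bound obtained by differentiating the inner supremum at a fixed $\gamma^*\in\Gamma^*$, the $\liminf$ bound obtained from consistency of the perturbed minimizers together with condition (i) applied at a point of $\Xi^*(\bar\gamma)$), followed by the Lipschitz upgrade from G\^ateaux to Hadamard directional differentiability. The only cosmetic differences are that you invoke Danskin's theorem where the paper reproves the same fact via upper semicontinuity of the argmax multifunction, and that you fix an arbitrary $\xi'\in\Xi^*(\bar\gamma)$ directly rather than extracting it as a limit of maximizer sequences of the perturbed inner problem --- which, if anything, makes the final step of passing to $\sup_{\xi'\in\Xi^*(\bar\gamma)}\eta(\bar\gamma,\xi')$ slightly more transparent than the paper's justification.
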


\begin{proof}
First consider an arbitrary $\phi \in C(\Gamma\times \Xi)$,
and for each $\gamma \in \Gamma$ denote
$\Xi^*_{\phi}(\gamma) := \arg\max_{\xi\in\Xi} \phi(\gamma,\xi)$.
Consider the multifunction (point-to-set mapping)
$(\phi,\gamma) \mapsto  \Xi^*_{\phi}(\gamma)$
mapping $(\phi,\gamma) \in C(\Gamma \times\Xi) \times \Gamma$
to the set $2^{\Xi}$ of subsets of $\Xi$.
As $\phi$ are continuous and $\Xi$ is compact, this multifunction is closed, i.e.,
if $(\phi_k,\gamma_k) \to (\phi,\gamma)$, $\xi_k \in  \Xi^*_{\phi_k}(\gamma_k)$ and $\xi_k\to \xi$, then $\xi\in \Xi^*_{\phi}(\gamma)$
(see, e.g., the remarks following \citealp[Proposition 4.4]{BS}).
Moreover, this multifunction is upper semicontinuous, i.e., for any
$(\bar{\phi},\bar{\gamma}) \in C(\Gamma \times\Xi) \times \Gamma$
and a neighborhood $\N$ of $\Xi^*_{\bar{\phi}}(\bar{\gamma})$,
it follows that $\Xi^*_{\phi}(\gamma)\subset \N$ for any
$(\phi,\gamma)$ sufficiently close to $(\bar{\phi},\bar{\gamma})$
(see, e.g., \citealp[Lemma 4.3]{BS}).

Now let $\eta\in C(\Gamma\times \Xi)$ and $t_k\downarrow 0$.
Consider $\gamma^*\in \Gamma^*$ and
$\xi_k\in \arg\max_{\xi\in\Xi}f(\gamma^*,\xi)+t_k\eta(\gamma^*,\xi)$.
We have
\[
V(f+t_k\eta)\le \sup_{\xi\in \Xi}\{f(\gamma^*,\xi)+t_k \eta(\gamma^*,\xi)\}=f(\gamma^*,\xi_k)+t_k \eta(\gamma^*,\xi_k).
\]
As the multifunction $(\phi,\gamma)\mapsto  \Xi^*_{\phi}(\gamma)$ is upper semicontinuous,
the distance from $\xi_k$ to the set $\Xi^*_f(\gamma^*)=\Xi^*(\gamma^*)$ tends to zero. Thus  by passing to a subsequence if necessary  we can assume that $\xi_k$ converges to a point $\xi^*\in \Xi^*(\gamma^*)$. Then
\[
V(f+t_k\eta)\le f(\gamma^*,\xi^*)+t_k \eta(\gamma^*,\xi^*)+o(t_k).
\]
As $V(f)= f(\gamma^*,\xi^*)$,
it follows that
\[
   \limsup_{t\downarrow 0}\frac{V(f+t\eta)-V(f)}{t}
   \le \sup_{\xi\in \Xi^*(\gamma^*)} \eta(\gamma^*,\xi).
\]
Since this holds for any $\gamma^*\in \Gamma^*$ it follows that
 \begin{equation}\label{limsup}
  \limsup_{t\downarrow 0}\frac{V(f+t\eta)-V(f)}{t}
  \le \inf_{\gamma\in \Gamma^*} \sup_{\xi\in \Xi^*(\gamma)} \eta(\gamma,\xi).
 \end{equation}

 Now consider sequences
 \begin{eqnarray}
 \label{sec-1}
 \gamma_k&\in& \arg\min_{\gamma\in \Gamma}\sup_{\xi\in \Xi}\{ f(\gamma,\xi)+t_k\eta(\gamma,\xi)\},\\
 \xi_k&\in &
 \label{sec-2}
 \arg\max_{\xi\in\Xi}\{f(\gamma_k,\xi)+t_k\eta(\gamma_k,\xi)\}.
 \end{eqnarray}
By \eqref{sec-1} and \eqref{sec-2} we  have that
\[
V(f+t_k\eta)=f(\gamma_k,\xi_k)+t_k\eta(\gamma_k,\xi_k).
\]
 By passing to a subsequence if necessary we can assume that
 $\gamma_k$ converges to a point $\gamma^*\in \Gamma^*$
 and $\xi_k$ converges to a point $\xi^*\in \Xi$.
As the multifunction $(\phi,\gamma)\mapsto  \Xi^*_{\phi}(\gamma)$ is upper semicontinuous,
it follows that $\xi^*\in \Xi^*(\gamma^*)$, and hence
    $V(f)=f(\gamma^*,\xi^*)$.

By condition (i) we have that $\gamma^*$ is a local minimizer of $f(\gamma,\xi^*)$, $\gamma\in \Gamma$.
It follows  that for $k$ large enough,
  $f(\gamma_k,\xi^*)\ge f(\gamma^*,\xi^*)$, and
 thus
 \[
 V(f+t_k\eta)
 \ge f(\gamma_k,\xi^*)+t_k\eta(\gamma_k,\xi^*)
 \ge f(\gamma^*,\xi^*)+t_k\eta(\gamma_k,\xi^*).
 \]
   It follows that
 \begin{equation}\label{liminf}
  \liminf_{t\downarrow 0}\frac{V(f+t\eta)-V(f)}{t}\ge \eta(\gamma^*,\xi^*).
 \end{equation}
As the arguments above hold for an arbitrary choice of the sequence $\xi_k$ and an arbitrary limit point $\xi^* \in \Xi^*(\gamma^*)$, the lower bound in \eqref{liminf} can be replaced with $\sup_{\xi\in \Xi^*(\gamma^*)} \eta(\gamma^*,\xi)$,
and thus also with $\inf_{\gamma\in \Gamma^*} \sup_{\xi\in \Xi^*(\gamma)} \eta(\gamma,\xi)$.
Together with \eqref{limsup} this completes the proof.
\end{proof}

It could be noted that the {\em  upper} bound \eqref{limsup} does not involve assumption (i) of Proposition \ref{hdd-1}. The following example shows that the upper bound \eqref{limsup}  can be strict without an additional condition, such as assumption (i),  even if problem \eqref{intro-minimax-pop} has a unique optimal solution $\gamma^*$, i.e.,  $\Gamma^*=\{\gamma^*\}$ is the singleton.

\begin{example}
Let $\Gamma=[-1,1]\subset \bbr$, the set $\Xi=\{\xi_1,\xi_2\}$ consists of two points, and \linebreak $f(\gamma,\xi_1)=-\gamma$, $f(\gamma,\xi_2)=\gamma$. Then
$V(f)=\min_{-1\le \gamma\le 1} \max\{-\gamma,\gamma\}=0$, $\Gamma^*=\{\gamma^*\}$, $\gamma^*=0$,   and $\Xi^*(\gamma^*)=\{\xi_1,\xi_2\}$. Consider $\eta(\gamma,\xi)$ with $\eta(\gamma,\xi_1)\equiv 1$ and $\eta(\gamma,\xi_2)\equiv 0$. Then
\[
V(f+t\eta)=\min_{-1\le \gamma\le 1} \max\{-\gamma+t,\gamma\}=t/2,
\]
and hence $V'_f(\eta)=1/2$. On the other hand,
\[
\min_{\gamma\in \Gamma^*}\sup_{\xi\in \Xi^*(\gamma)} \eta(\gamma,\xi)=\max\{\eta(\gamma^*,\xi_1), \eta(\gamma^*,\xi_2)\}=1.
\]
That is, the inequality \eqref{limsup}   is strict and \eqref{deriv-a} does not hold.
Instead, equation \eqref{dirderiv} (below)  gives the correct formula for the directional derivative.
\end{example}

Proposition \ref{hdd-1} is a slight generalization of the corresponding result in \citet[Proposition 2.1]{sha08}, who considered the following convex-concave case. Suppose that the sets $\Gamma$ and $\Xi$ are convex, and $f(\gamma,\xi)$  is convex in $\gamma$ and concave in $\xi$. Then $\Xi^*=\Xi^*(\gamma^*)$ does not depend on   $\gamma^*\in \Gamma^*$  and,
for $\xi^*\in \Xi^*$, the point $(\gamma^*,\xi^*)$ is a saddle point of the minimax problem \eqref{intro-minimax-pop}. Consequently $\gamma^*$ is a minimizer of $f(\gamma,\xi^*)$  and hence condition {\rm (i)} of Proposition \ref{hdd-1} holds.
Therefore
\begin{equation}\label{dird-1}
 V'_f(\eta)= \inf_{\gamma^*\in \Gamma^*} \sup_{\xi^*\in \Xi^*} \eta(\gamma^*,\xi^*),
\end{equation}
which coincides with the respective formula (2.5) in \cite{sha08}.

The directional differentiability result of Proposition \ref{hdd-1} provides a connection to the earlier results of \cite{sha08}, but as condition (i) of this proposition is difficult to verify in practice, we next consider two other more concrete cases in which directional differentiability results can be established.

Towards this end,
first note that the population minimax problem \eqref{intro-minimax-pop} can be equivalently formulated as the semi-infinite programming problem
\begin{equation}\label{semi-f}
\min_{z\in \bbr, \gamma\in \Gamma} z
\quad\text{s.t.}\quad
f(\gamma,\xi)-z\le 0,
\;\forall \xi\in \Xi,
\end{equation}
where the minimization is over the variables $(z,\gamma) \in \bbr\times\Gamma\subset \bbr^{n+1}$.
Now $\vartheta^*$ and $(\gamma^*,\xi^*)$ are an optimal value and solution of minimax problem \eqref{intro-minimax-pop} if and only if $(z^*,\gamma^*)$ with $z^* = \vartheta^*$ is a solution of the problem \eqref{semi-f} with $\xi^* \in \arg\max_{\xi\in \Xi}f(\gamma^*,\xi)$.

More generally, let the function
$\phi \in C ( \Gamma \times \Xi )$
be arbitrary, and consider the general semi-infinite programming problem
\begin{equation}\label{semi-phi}
\min_{z\in \bbr, \gamma\in \Gamma} z
\quad\text{s.t.}\quad
\phi(\gamma,\xi)-z\le 0,
\;\forall \xi\in \Xi,
\end{equation}
which is equivalent to \eqref{optfunc}. When $\phi = f$, \eqref{semi-phi} becomes problem \eqref{semi-f}. When
$\phi = \hat{f}_N$, \eqref{semi-phi} becomes
\[
\min_{z\in \bbr, \gamma\in \Gamma} z
\quad\text{s.t.}\quad
\hat{f}_N(\gamma,\xi)-z\le 0,
\;\forall \xi\in \Xi,
\]
which is equivalent to the sample minimax problem \eqref{intro-minimax-sam}.
We continue denoting the optimal value of problem \eqref{semi-phi}, depending on the function $\phi\in C( \Gamma \times \Xi )$, with $V(\phi)$. Again, we are interested in differentiability properties of the value function $V(\phi)$ in the vicinity of the point $\phi_0 = f$.

To this end, we apply results from \citet[Sections 5.4 and 4.3]{BS}
in a differentiable case and make the following assumption.

\begin{assumption} \label{ass-1a}
\mbox{}\vspace*{-6pt}
\begin{mylist}{(iii)}
\item [{(i)}] For almost every $X$ and all $\xi\in \Xi$ the function $F(X,\gamma,\xi)$ is differentiable in $\gamma$
with $\nabla_\gamma F(X,\gamma,\xi)$   continuous in $(\gamma,\xi)$.
\item [{(ii)}]  There is an integrable function $C(X)$ such that  almost surely   $\|\nabla_\gamma F(X,\gamma,\xi)\|\le C(X)$
for all $(\gamma,\xi)\in\Gamma\times\Xi$.
\item [{(iii)}]  Every $\gamma\in \Gamma^*$ is an interior point of the set $\Gamma$.
\end{mylist}
\end{assumption}

It follows (e.g. \citealp[Theorem 7.44]{SDR})
that $f(\gamma,\xi)$ is continuously differentiable in $\gamma$ and
\[
 \nabla_\gamma f(\gamma,\xi)=\bbe[\nabla_\gamma F(X,\gamma,\xi)].
\]
Thus in problem \eqref{semi-f} the objective function and the inequality constraints are continuously differentiable in $(z,\gamma)$ for all fixed $\xi$ with the gradients with respect to $(z,\gamma)$  given by
\[
(1,0,\ldots,0) \qquad\text{and}\qquad (-1,\nabla_\gamma f(\gamma,\xi))
\]
and these derivatives are continuous in $(z,\gamma,\xi)$.

Next consider first-order optimality conditions of problem \eqref{semi-f}.
We follow \citet[Section 5.4.2]{BS}.
Suppose that $\gamma^*\in \Gamma^*$ and $z^*=\vv^*$ are an optimal solution and the optimal value of problem  \eqref{intro-minimax-pop} or, in other words, that $(z^*,\gamma^*)$ is a solution to \eqref{semi-f}, and that the corresponding optimal solutions in $\xi$ are $\Xi^*(\gamma^*)$.
 Note that the extended Mangasarian-Fromovitz  constraint qualification (e.g., \citealp[eq. (5.280)]{BS}) for the semi-infinite problem \eqref{semi-f} holds automatically.
It now follows from
first-order optimality conditions
(e.g., \citealp[Theorems 5.111 and 5.113]{BS})
that there exists a $k\le n+1$ and a discrete measure
$\mu=\sum_{i=1}^{k}\lambda_i \delta_{\xi_i}$
 on $\Xi$ such that
\begin{equation}\label{optcon}
\lambda_i\ge 0 \text{ and } \xi_i\in \Xi^*(\gamma^*) \text{ for } i=1,\ldots,k, \quad
\sum_{i=1}^{k}\lambda_i =1,
\quad\text{and}\quad
\sum_{i=1}^{k}\lambda_i \nabla_\gamma f(\gamma^*,\xi_i)=0.
\end{equation}
Denote by $\Lambda(\gamma^*)$  the set of discrete Lagrange multiplier measures
$\mu=\sum_{i=1}^{k}\lambda_i \delta_{\xi_i}$
satisfying \eqref{optcon} for $k = n+1$.
 Note that
by Carath{\'e}odory's theorem it suffices to take $k\le n+1$ in the optimality equations \eqref{optcon}.

We next consider the more general problem \eqref{semi-phi} and derive appropriate differentiability results for the value function $V(\phi)$ making use of results in \citet[Sec. 5.4.4]{BS}.
To this end, note that the Lagrangian of problem \eqref{semi-phi} is
\[
L(z,\gamma,\mu,\phi):=z+\int_\Xi (\phi(\gamma,\xi)-z)d\mu(\xi),
\]
where $\mu$ denotes any finite measure on $\Xi$.
In particular, if $\mu$ is any discrete measure of the form
$\mu=\sum_{i=1}^{k}\lambda_i \delta_{\xi_i}$, then
\[
L(z,\gamma,\mu,\phi):=z+\sum_{i=1}^{k}\lambda_i (\phi(\gamma,\xi_i)-z),
\]
and in this case the directional derivative of $L(z,\gamma,\mu,\phi)$ at $\phi=f$ in the direction
$\eta \in C( \Gamma \times \Xi )$ equals
\[
D_{\phi} L(z,\gamma,\mu,f) \eta = \sum_{i=1}^{k}\lambda_i \eta(\gamma,\xi_i).
\]

For any solution $(z^*,\gamma^*)$ of \eqref{semi-f},
the corresponding $\xi$-solutions $\Xi^*(\gamma^*)$,
and direction $\eta \in C(\Gamma \times \Xi)$,
an appropriate linearization of problem \eqref{semi-phi}
is given by
(cf. \citealp[eqn (5.322)]{BS})
\[
\min_{z\in \bbr, \gamma\in \Gamma} z
\quad\text{s.t.}\quad
\nabla_{\gamma} f (\gamma,\xi) + \eta(\gamma,\xi) - z \le 0,
\;\forall \xi\in \Xi^*(\gamma^*),
\]
and the corresponding Lagrangian dual problem takes the form
\begin{equation} \label{lag-dual}
\sup_{\mu\in \Lambda(\gamma^*)}
\sum_{i=1}^{n+1}\lambda_i \eta(\gamma^*,\xi_i).
\end{equation}

With these preparations, we can now state a result that gives an upper bound for a (potential) Hadamard directional derivative.
Recall  that $f\in C(\Gamma\times\Xi)$,  the   sets $\Gamma$ and $\Xi$ are assumed to be compact, and $\Lambda(\gamma^*)$ consists of
discrete measures
$\mu=\sum_{i=1}^{k}\lambda_i \delta_{\xi_i}$
 satisfying \eqref{optcon}.

\begin{proposition}
Suppose that Assumptions {\rm  \ref{ass-1}} and {\rm \ref{ass-1a}} hold. Then
for any $\eta\in C(\Gamma\times\Xi)$ it follows that
\begin{equation}\label{bound}
  \limsup_{t\downarrow 0}\frac{V(f+t\eta)-V(f)}{t}\le \inf_{\gamma^*\in \Gamma^*} \sup_{\mu\in \Lambda(\gamma^*)}\sum_{i=1}^{n+1}\lambda_i \eta(\gamma^*,\xi_i).
\end{equation}
\end{proposition}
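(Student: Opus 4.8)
The plan is to sharpen the crude upper bound \eqref{limsup}---obtained in the proof of Proposition \ref{hdd-1} by freezing $\gamma=\gamma^*$---by also allowing $\gamma$ to move to first order. Fix an arbitrary $\gamma^*\in\Gamma^*$. Since, by Assumption \ref{ass-1a}(iii), $\gamma^*$ is interior to $\Gamma$, for any direction $d\in\bbr^n$ the point $\gamma^*+td$ is feasible for all small $t>0$. Plugging $\gamma=\gamma^*+td$ into the semi-infinite problem \eqref{semi-phi} with $\phi=f+t\eta$ gives the feasible value
\[
V(f+t\eta)\le \sup_{\xi\in\Xi}\bigl\{ f(\gamma^*+td,\xi)+t\,\eta(\gamma^*+td,\xi)\bigr\}.
\]
The goal is to pass to the limit $t\downarrow 0$ on the right-hand side, obtaining an expression that is affine in $d$ over the active set $\Xi^*(\gamma^*)$, and then to optimize over $d$ and dualize.

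For the limit, I would prove a Danskin-type expansion. By Assumption \ref{ass-1a}(i)--(ii), $f(\cdot,\xi)$ is continuously differentiable with gradient bounded and continuous in $(\gamma,\xi)$, so uniformly in $\xi$,
\[
f(\gamma^*+td,\xi)+t\,\eta(\gamma^*+td,\xi)=f(\gamma^*,\xi)+t\bigl[\nabla_\gamma f(\gamma^*,\xi)^{\top}d+\eta(\gamma^*,\xi)\bigr]+o(t).
\]
Since $f(\gamma^*,\xi)\le \vv^*$ with equality exactly on $\Xi^*(\gamma^*)$, and since (by continuity of $f$ and compactness of $\Xi$) $f(\gamma^*,\xi)\le \vv^*-c$ on the complement of any fixed neighborhood of $\Xi^*(\gamma^*)$, the supremum is attained, for small $t$, arbitrarily close to $\Xi^*(\gamma^*)$, while the inactive constraints stay slack. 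A standard maximum-function argument then yields
\[
\limsup_{t\downarrow 0}\frac{V(f+t\eta)-V(f)}{t}\le \sup_{\xi\in\Xi^*(\gamma^*)}\bigl\{\nabla_\gamma f(\gamma^*,\xi)^{\top}d+\eta(\gamma^*,\xi)\bigr\},
\]
and, since $d$ was arbitrary, the right-hand side may be replaced by its infimum over $d\in\bbr^n$. This infimum is precisely the optimal value of the linearized problem introduced before \eqref{lag-dual}.

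It remains to identify this linearized value with the Lagrangian dual \eqref{lag-dual}. The map $d\mapsto \sup_{\xi\in\Xi^*(\gamma^*)}\{\nabla_\gamma f(\gamma^*,\xi)^{\top}d+\eta(\gamma^*,\xi)\}$ is convex and, by minimax duality over probability measures on the compact set $\Xi^*(\gamma^*)$, its infimum over $d$ equals
\[
\sup\Bigl\{\tsum \lambda_i\,\eta(\gamma^*,\xi_i):\ \lambda_i\ge0,\ \tsum\lambda_i=1,\ \tsum\lambda_i\nabla_\gamma f(\gamma^*,\xi_i)=0,\ \xi_i\in\Xi^*(\gamma^*)\Bigr\},
\]
the inner minimization in $d$ forcing the constraint $\tsum\lambda_i\nabla_\gamma f(\gamma^*,\xi_i)=0$. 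By Carath\'eodory's theorem the optimal measure has at most $n+1$ atoms, so the feasible measures are exactly those in $\Lambda(\gamma^*)$, and the displayed value equals $\sup_{\mu\in\Lambda(\gamma^*)}\sum_{i=1}^{n+1}\lambda_i\eta(\gamma^*,\xi_i)$. Taking the infimum over $\gamma^*\in\Gamma^*$ gives \eqref{bound}.

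The main obstacle is the duality step: one must guarantee that the linearized semi-infinite LP has no duality gap and that the dual supremum is attained over a nonempty multiplier set. This is exactly where the automatic validity of the extended Mangasarian--Fromovitz constraint qualification for \eqref{semi-f} is used---it makes $\Lambda(\gamma^*)$ nonempty and compact---so that the abstract sensitivity results of \citet[Sec. 5.4.4]{BS} apply; the accompanying technical care lies in the uniform first-order expansion and the verification that maximizers of the perturbed inner problem collapse onto $\Xi^*(\gamma^*)$.
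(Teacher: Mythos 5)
Your proof is correct, but it follows a genuinely different route from the paper's. The paper's proof is essentially a citation: since the extended Mangasarian--Fromovitz constraint qualification holds automatically for the semi-infinite reformulation \eqref{semi-f}, the directional regularity condition (5.324) of \cite{BS} is satisfied, and Proposition 5.121 there (together with the form of the Lagrangian dual \eqref{lag-dual}) directly delivers \eqref{bound}. You instead reprove the relevant special case of that abstract result from first principles: interiority of $\gamma^*$ (Assumption \ref{ass-1a}(iii)) makes the ray $\gamma^*+td$ feasible, the uniform first-order expansion (justified by Assumption \ref{ass-1a}(i)--(ii) and compactness) plus a Danskin-type collapse of near-maximizers onto $\Xi^*(\gamma^*)$ gives the linearized upper bound $\sup_{\xi\in\Xi^*(\gamma^*)}\{\nabla_\gamma f(\gamma^*,\xi)^{\top}d+\eta(\gamma^*,\xi)\}$ for every direction $d$, and duality converts $\inf_d \sup_\xi$ into the multiplier form. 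What the paper's route buys is brevity and the rigor of an established general theorem; what yours buys is a self-contained argument that makes transparent exactly where each hypothesis enters, and which correctly specializes the machinery (indeed, your construction also explains why \eqref{limsup} with $d=0$ is the ``crude'' version of the same bound). Two steps in your write-up are asserted rather than proved and should be flagged, though neither is a genuine gap. First, the interchange $\inf_d\sup_\mu=\sup_\mu\inf_d$ needs a minimax theorem (e.g.\ Sion's, using weak-$*$ compactness and convexity of the set of probability measures on the compact set $\Xi^*(\gamma^*)$, and bilinearity of the pairing). Second, your Carath\'eodory count is too quick: tracking both the constraint $\sum_i\lambda_i\nabla_\gamma f(\gamma^*,\xi_i)=0$ and the objective $\sum_i\lambda_i\eta(\gamma^*,\xi_i)$ places the relevant moment vector in $\bbr^{n+1}$, so plain Carath\'eodory yields $n+2$ atoms, not $n+1$; to land inside $\Lambda(\gamma^*)$ as the paper defines it, observe that the optimal moment vector lies on the boundary of the convex hull (no feasible point has a larger $\eta$-coordinate), so the supporting-hyperplane (Fenchel--Eggleston) refinement of Carath\'eodory reduces the representation to at most $n+1$ atoms.
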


\begin{proof}
We apply Proposition 5.121 of \cite{BS};
see also the remarks that follow it as well as Proposition 4.22.
Specifically, as the  extended Mangasarian-Fromovitz constraint qualification
holds,
so does the directional regularity condition (5.324) of \cite{BS}.
The upper bound \eqref{bound} now follows from Proposition 5.121 (and the remarks that follow it)
together with the form of the Lagrangian dual problem \eqref{lag-dual}.
\end{proof}

The bound on the right hand side of \eqref{bound} can be not tight, i.e. it can be strictly larger than the respective $\liminf_{t\downarrow 0}$, even if $\Gamma^*=\{\gamma^*\}$ is the singleton and $\Xi^*(\gamma^*)$ is finite
 (cf., \citealp[p. 523]{BS}).
However, there are two  important   cases in which the directional derivative  $V'_f(\eta)$ is equal to the right hand side of \eqref{bound}.

The first case occurs when $f(\gamma,\xi)$ is convex in $\gamma$
 and the set $\Gamma$ is convex.
In that case the set $\Lambda^*$ of Lagrange multipliers
coincides with the set of optimal solutions of the dual problem
\[
 \max_{\mu\succeq 0}\inf_{z\in \bbr, \gamma\in \Gamma}L(z,\gamma,\mu,f),
\]
where the maximization is over (nonnegative)  discrete measures on $\Xi$ (cf., \citealp[Section 5.4.1]{BS}).
Moreover,
the set $\Lambda^*$
 is nonempty, convex, bounded, and the same for any $\gamma^*\in \Gamma^*$
(cf., \citealp[Theorem 5.107]{BS};
note that the corresponding Slater condition holds here automatically).
Furthermore, the set $\Lambda^*$
  does not require  the differentiability conditions of Assumption \ref{ass-1a},
with the gradients being replaced by the respective subgradients
(cf., \citealp[Section 3.1.1, Definition 3.5]{BS}).
In this case we have the following result (cf., \citealp[Theorem 4.24]{BS}).
Consider the set $\cC$ of convex continuous functions $\nu:\Gamma\to \bbr$.
The set $\cC$  is a convex cone subset of  $C(\Gamma)$.
The corresponding set $$\bar{\cC}:=\big \{\psi\in C(\Gamma\times \Xi):\psi(\cdot,\xi)\in \cC,\;\xi\in \Xi\big\}$$ is a convex closed subset of  $C(\Gamma\times \Xi)$ consisting  of convex continuous   in $\gamma$ functions.

\begin{proposition}
\label{pr-dif1}
Suppose that Assumption {\rm \ref{ass-1}} holds, the set $\Gamma$ is convex  and    for
every $\xi\in \Xi$ the function $f(\gamma,\xi)$ is convex continuous in $\gamma$, i.e., $f\in\bar{\cC}$. Then the value function $V(\cdot)$ is Hadamard directionally differentiable at $f$ tangentially to the set   $\bar{\cC}$   and
\begin{equation}\label{dirderiv}
 V'_f(\eta)= \inf_{\gamma^*\in \Gamma^*} \sup_{\mu\in \Lambda^*}\sum_{i=1}^{n+1}\lambda_i \eta(\gamma^*,\xi_i).
\end{equation}
\end{proposition}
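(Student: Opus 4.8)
The plan is to exploit convexity to promote the one-sided bound of the preceding proposition into an exact two-sided directional derivative, working throughout with the convex semi-infinite reformulation \eqref{semi-phi}. The structural fact driving everything is that, restricted to the convex set $\bar\cC$, the value function $V$ is convex: for $\phi\in\bar\cC$ the map $(\gamma,\phi)\mapsto\sup_{\xi\in\Xi}\phi(\gamma,\xi)$ is jointly convex (a supremum of functions affine in $\phi$ and convex in $\gamma$), so $V(\phi)=\inf_{\gamma\in\Gamma}\sup_\xi\phi(\gamma,\xi)$ is a partial infimum of a jointly convex function over the convex set $\Gamma$, hence convex in $\phi$ on $\bar\cC$. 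Together with the global Lipschitz continuity of $V$ on $C(\Gamma\times\Xi)$ recorded earlier, this guarantees that the one-sided directional derivative tangentially to $\bar\cC$ exists as a monotone limit of difference quotients and that the Gâteaux-type limit upgrades automatically to the Hadamard tangential derivative. So the real work is to identify the value of $V'_f(\eta)$ with the right-hand side of \eqref{dirderiv}, which I would do by a matching pair of inequalities, following the convex sensitivity theory of \citet[Theorem 4.24]{BS}.

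For the upper bound I would appeal to the convex specialization of the preceding proposition. In the present setting the multiplier set is the same for every optimal $\gamma^*$, i.e.\ $\Lambda(\gamma^*)=\Lambda^*$, and it is characterized through subgradients rather than gradients, so the linearization/dual argument behind \eqref{bound} still applies and yields $\limsup_{t\downarrow 0}t^{-1}(V(f+t\eta)-V(f))\le\inf_{\gamma^*\in\Gamma^*}\sup_{\mu\in\Lambda^*}\sum_{i=1}^{n+1}\lambda_i\,\eta(\gamma^*,\xi_i)$, precisely the claimed formula. Global Lipschitzness then transfers this bound from a fixed $\eta$ to the Hadamard (perturbed-direction) $\limsup$, since $|V(f+t\eta')-V(f+t\eta)|\le t\,L\|\eta'-\eta\|$ forces the perturbation of the quotient to vanish as $\eta'\to\eta$.

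The genuinely convex-specific content is the matching lower bound, which I would obtain by a short self-contained weak-duality computation. Fix a Hadamard tangential sequence $t_j\downarrow 0$, $\eta_j\to\eta$ with $f+t_j\eta_j\in\bar\cC$, let $\gamma_j$ solve the perturbed problem, and pass to a subsequence with $\gamma_j\to\gamma^*\in\Gamma^*$ (compactness of $\Gamma$ and upper semicontinuity of the solution multifunction, as in the proof of Proposition \ref{hdd-1}). For any $\mu=\sum_{i=1}^{n+1}\lambda_i\delta_{\xi_i}\in\Lambda^*$, using that $\mu$ is a probability measure and that strong duality gives $\int_\Xi f(\gamma,\xi)\,d\mu(\xi)\ge\vv^*$ for every $\gamma$ (because $\vv^*=\min_\gamma\int f\,d\mu$ for $\mu\in\Lambda^*$),
\[
V(f+t_j\eta_j)=\sup_{\xi\in\Xi}(f+t_j\eta_j)(\gamma_j,\xi)\ge\int_\Xi(f+t_j\eta_j)(\gamma_j,\xi)\,d\mu(\xi)\ge\vv^*+t_j\sum_{i=1}^{n+1}\lambda_i\,\eta_j(\gamma_j,\xi_i).
\]
Dividing by $t_j$ and letting $j\to\infty$, using $\gamma_j\to\gamma^*$ and the uniform convergence $\eta_j\to\eta$, yields $\liminf_j t_j^{-1}(V(f+t_j\eta_j)-\vv^*)\ge\sum_i\lambda_i\,\eta(\gamma^*,\xi_i)$; taking the supremum over $\mu\in\Lambda^*$ and noting the limiting value is at least $\inf_{\gamma'\in\Gamma^*}\sup_{\mu\in\Lambda^*}\sum_i\lambda_i\,\eta(\gamma',\xi_i)$ gives the reverse inequality and hence \eqref{dirderiv}.

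The step I expect to be the crux is this lower bound, and the reason is exactly the failure mode flagged in the remark after \eqref{bound}: for a general (nonconvex) $f$ the point $\gamma^*$ satisfies only a stationarity/KKT condition, so the global inequality $\int f(\gamma_j,\xi)\,d\mu(\xi)\ge\vv^*$ breaks down and the gap between $\liminf$ and $\limsup$ reopens. Convexity of $f$ in $\gamma$ (i.e.\ $f\in\bar\cC$) is what makes $\gamma^*$ a \emph{global} minimizer of the Lagrangian $\int f(\cdot,\xi)\,d\mu(\xi)$, closing the duality gap and forcing the two bounds to coincide. The remaining care is bookkeeping: justifying the convergence of the perturbed minimizers and the joint limit $(\gamma_j,\eta_j)\to(\gamma^*,\eta)$, and keeping the argument inside $\bar\cC$ so that the convex sensitivity theory applies — which is precisely why the differentiability is asserted only tangentially to $\bar\cC$.
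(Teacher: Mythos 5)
Your overall architecture---prove matching upper and lower bounds and let them force both the existence of the Hadamard tangential derivative and the formula \eqref{dirderiv}---is reasonable, and your lower bound is correct and genuinely more elementary than the paper's proof (which simply cites \citealp[Theorem 4.24]{BS}): the chain $V(f+t_j\eta_j)=\sup_{\xi}(f+t_j\eta_j)(\gamma_j,\xi)\ge\int_\Xi (f+t_j\eta_j)(\gamma_j,\xi)\,d\mu(\xi)\ge \vv^*+t_j\sum_i\lambda_i\eta_j(\gamma_j,\xi_i)$ for $\mu\in\Lambda^*$ uses exactly the fact that dual optimal measures are probability measures satisfying $\inf_{\gamma\in\Gamma}\int_\Xi f(\gamma,\xi)\,d\mu(\xi)=\vv^*$, and it correctly isolates why convexity matters (global, not merely local, minimality of the Lagrangian). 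However, one of your supporting claims is false: $V$ is \emph{not} convex on $\bar{\cC}$. The map $(\gamma,\phi)\mapsto\phi(\gamma,\xi)$ is convex in $\gamma$ and linear in $\phi$ but not \emph{jointly} convex---the coupling is bilinear---so the ``partial infimum of a jointly convex function'' argument breaks down. Concretely, take $\Xi$ a singleton and $\Gamma=[-1,1]$, so $V(\phi)=\min_\gamma\phi(\gamma)$; with $\phi_1(\gamma)=\gamma$ and $\phi_2(\gamma)=-\gamma$ (both in $\bar{\cC}$) one has $V(\phi_1)=V(\phi_2)=-1$ but $V(\tfrac12\phi_1+\tfrac12\phi_2)=0$, violating convexity ($V$ is in fact concave there). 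This error is non-fatal only because matching two-sided bounds, once established, give existence of the derivative without any monotonicity argument; but you cannot invoke it to get existence ``for free.''

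The genuine gap is the upper bound, which is precisely the hard half. The bound \eqref{bound} of the preceding proposition is proved under Assumption \ref{ass-1a} via the differentiable linearization theory (Proposition 5.121 of \citealp{BS}), and Proposition \ref{pr-dif1} deliberately drops Assumption \ref{ass-1a}; asserting that this argument ``still applies'' with subgradients is assuming what has to be shown---and the Example following Proposition \ref{hdd-1} (which has convex, indeed linear, $f(\cdot,\xi_i)$) shows this is exactly where the content lies, since the multiplier bound is strictly sharper than the elementary Danskin bound $\inf_{\gamma^*}\sup_{\xi\in\Xi^*(\gamma^*)}\eta(\gamma^*,\xi)$ that one gets by plugging in a fixed $\gamma^*$. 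Note also that your upper-bound treatment never actually uses tangency to $\bar{\cC}$, whereas that is precisely where it must enter: the perturbed problems must remain convex. The paper closes this half by invoking the convex perturbation theory of \citet[Theorem 4.24]{BS} and verifying its hypotheses (directional regularity and continuity of optimal solutions). Alternatively, you could make your route self-contained by mirroring your own lower bound: for $f+t\eta'\in\bar{\cC}$, strong duality holds for the perturbed semi-infinite problem (Slater is automatic), so choosing a dual optimal probability measure $\mu_t$ gives, for any $\gamma^*\in\Gamma^*$, $V(f+t\eta')=\min_\gamma\int_\Xi(f+t\eta')\,d\mu_t\le\int_\Xi(f+t\eta')(\gamma^*,\xi)\,d\mu_t(\xi)\le\vv^*+t\int_\Xi\eta'(\gamma^*,\xi)\,d\mu_t(\xi)$; then weak$^*$ compactness of probability measures on $\Xi$ and upper semicontinuity of the dual solution map as $t\downarrow0$ force limit points of $\mu_t$ into the dual optimal set, yielding $\limsup\le\inf_{\gamma^*\in\Gamma^*}\sup_{\mu\in\Lambda^*}\sum_i\lambda_i\eta(\gamma^*,\xi_i)$. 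Without either that argument or the citation the paper uses, your proof establishes only the lower bound.
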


\begin{proof}
This result can be obtained from Theorem 4.24 of \cite{BS}
due to convexity and the validity of the directional regularity condition (5.324).
Condition (iii) of \citet[Theorem 4.24]{BS} (of continuity of optimal solutions) holds here by the compactness and continuity arguments (e.g., \citealp[Proposition 4.4]{BS} applied to the case where the feasible set does not depend on the parameters).
\end{proof}

\begin{remark}
If  Assumption \ref{ass-1a} also holds, 
we can again make a connection with 
the convex-concave case studied in \cite{sha08}.
When the sets $\Gamma$ and $\Xi$ are convex and
$f(\gamma,\xi)$  is convex in $\gamma$ and concave in $\xi$,
we noted above \eqref{dird-1} that for any
$\gamma^* \in \Gamma^*$ and $\xi^* \in \Xi^*$,
the point $\gamma^*$ is a minimizer of $f(\gamma,\xi^*)$
and consequently $\nabla_\gamma f(\gamma^*,\xi^*)=0$
(as it is now assumed that $\gamma^*$ is an interior point of $\Gamma$). It follows that the set $\Lambda (\gamma^*)$ consists of all possible Lagrange multipliers and hence \eqref{dirderiv} reduces to
\[
 V'_f(\eta)= \inf_{\gamma^*\in \Gamma^*} \sup_{\xi^*\in \Xi^*} \eta(\gamma^*,\xi^*).
\]
This coincides with the respective formula (2.5) in \cite{sha08}, where it was derived without assuming differentiability of $f(\cdot,\xi)$ and that the optimal solutions $\gamma^*$ are interior points of $\Gamma$.
\end{remark}

The second case where directional differentiability is obtained is when the set of Lagrange multipliers
$\Lambda(\gamma^*)=\{\mu(\gamma^*)\}$  is the singleton for every $\gamma^*\in \Gamma^*$.
Conditions for uniqueness of Lagrange multipliers are given in \citet[Theorem 5.114]{BS}. In particular, the following condition is sufficient for the uniqueness: For every $\gamma^*\in \Gamma^*$, the set $\Xi^*(\gamma^*)=\{\xi_1^*,\ldots,\xi_{k}^*\}$ is finite ($k$ may depend on $\gamma^*$) and the gradient vectors $\nabla_\gamma f(\gamma^*,\xi^*_i)$, $i=1,\ldots,k$, are affine independent (i.e., if $\sum_{i=1}^k a_i \nabla_\gamma f(\gamma^*,\xi^*_i)=0$ and $\sum_{i=1}^k a_i=0$, then $a_i=0$, $i=1,\ldots,k$). Note that this condition implies that $k\le n+1$.

In this second case, the previous results combined with
\citet[Theorem 4.26]{BS}
immediately give the following result.

\begin{proposition}
\label{pr-dif2}
Suppose that 
Assumptions {\rm \ref{ass-1}} and {\rm \ref{ass-1a}} hold,
and that $\Lambda(\gamma^*)=\{\mu(\gamma^*)\}$ is the singleton for every $\gamma^*\in \Gamma^*$,
where $\mu(\gamma^*)=\sum_{i=1}^{k}\lambda_i (\gamma^*)\delta_{\xi_i(\gamma^*)}$. Then the value function $V(\cdot)$ is Hadamard directionally differentiable at $f$ and
\[
 V'_f(\eta)= \inf_{\gamma^*\in \Gamma^*} \sum_{i=1}^{n+1}\lambda_i(\gamma^*) \eta(\gamma^*,\xi_i(\gamma^*)).
\]
\end{proposition}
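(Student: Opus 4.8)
The plan is to combine the upper bound already established in \eqref{bound} with \citet[Theorem 4.26]{BS} to obtain a matching lower bound. First observe that when $\Lambda(\gamma^*)=\{\mu(\gamma^*)\}$ is a singleton for every $\gamma^*\in\Gamma^*$, the inner supremum over $\mu\in\Lambda(\gamma^*)$ on the right-hand side of \eqref{bound} collapses to a single term, so that
\[
\limsup_{t\downarrow 0}\frac{V(f+t\eta)-V(f)}{t}\le \inf_{\gamma^*\in\Gamma^*}\sum_{i=1}^{n+1}\lambda_i(\gamma^*)\,\eta(\gamma^*,\xi_i(\gamma^*)).
\]
It therefore remains to show that the corresponding $\liminf_{t\downarrow 0}$ is bounded below by the same quantity; establishing this simultaneously yields existence of the directional derivative and its claimed value.

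For the lower bound I would invoke \citet[Theorem 4.26]{BS}, which under directional regularity gives Hadamard directional differentiability of the optimal value function of the semi-infinite program \eqref{semi-phi}, with derivative $\inf_{\gamma^*\in\Gamma^*}\sup_{\mu\in\Lambda(\gamma^*)}D_{\phi}L(z,\gamma^*,\mu,f)\eta$. The hypotheses of that theorem must be checked in the present semi-infinite setting, and all of them are already in force: the directional regularity condition (5.324) of \cite{BS} holds because the extended Mangasarian--Fromovitz constraint qualification holds automatically for \eqref{semi-f} (as used in the proof of the preceding proposition); the solution set $\Gamma^*$ is nonempty and compact since $\Gamma$ is compact and $f$ is continuous; and the upper semicontinuity of the $\xi$-solution multifunction $(\phi,\gamma)\mapsto\Xi^*_{\phi}(\gamma)$ (e.g.\ \citealp[Proposition 4.4]{BS}) supplies the required continuity of the optimal-solution correspondence, exactly as in the proof of Proposition \ref{pr-dif1}. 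The uniqueness of the Lagrange multiplier is the standing hypothesis of the statement.

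Finally I would translate the abstract formula into the stated form. Using the directional derivative of the Lagrangian computed earlier, $D_{\phi}L(z,\gamma,\mu,f)\eta=\sum_{i=1}^{k}\lambda_i\,\eta(\gamma,\xi_i)$ for a discrete multiplier $\mu=\sum_{i=1}^{k}\lambda_i\delta_{\xi_i}$, together with the singleton assumption $\Lambda(\gamma^*)=\{\mu(\gamma^*)\}$ with $\mu(\gamma^*)=\sum_{i=1}^{k}\lambda_i(\gamma^*)\delta_{\xi_i(\gamma^*)}$, the supremum over $\mu$ disappears and the derivative reduces to $\inf_{\gamma^*\in\Gamma^*}\sum_{i=1}^{n+1}\lambda_i(\gamma^*)\,\eta(\gamma^*,\xi_i(\gamma^*))$, as claimed (recalling that by Carath\'eodory's theorem we may take $k\le n+1$ and pad the representation with zero weights).

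The main obstacle I anticipate is not the computation but the careful verification that \citet[Theorem 4.26]{BS} applies to the semi-infinite reformulation \eqref{semi-phi}: that theorem is stated for an abstract parameterized program, and one must confirm that uniqueness of the Lagrange multiplier---rather than a stronger second-order or uniform directional-regularity condition---is precisely what makes the first-order lower bound match the upper bound \eqref{bound}. In particular I would check that, once directional regularity and multiplier uniqueness hold, no additional second-order growth condition is hidden in the hypotheses of Theorem 4.26, so that the $\liminf$ and $\limsup$ genuinely coincide and the infimum over $\gamma^*\in\Gamma^*$ is the only residual nonsmoothness in the derivative.
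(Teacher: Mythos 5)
Your proposal is correct and follows essentially the same route as the paper: the paper's own justification is precisely that ``the previous results'' (the upper bound \eqref{bound} obtained via directional regularity and the Lagrangian dual form \eqref{lag-dual}) ``combined with \citet[Theorem 4.26]{BS} immediately give the result,'' which is exactly your combination of the collapsed upper bound with Theorem 4.26 under the singleton-multiplier hypothesis. Your additional verification of the hypotheses of Theorem 4.26 (directional regularity from the automatic extended Mangasarian--Fromovitz condition, compactness of $\Gamma^*$, and upper semicontinuity of the solution multifunction) is a sound fleshing-out of what the paper leaves implicit.
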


\subsection{Asymptotics}
\label{optval-asymptotics}

The preceding results can be converted into asymptotics using the functional delta method.
For Hadamard directionally differentiable maps the functional
delta method is given in \citet[Thm 2.1]{shap1991}.
For convenience, we reproduce this result as the following lemma.

\begin{lemma} 
Suppose the map $V:C(\Gamma\times \Xi)\to \mathbb{R}$ is Hadamard directionally differentiable
at $\phi$ tangentially to a closed
convex set $K\subset C(\Gamma\times \Xi)$ with a directional  derivative $V_{\phi}^{\prime}:\T_K(\phi)\to \mathbb{R}$.
Let $\phi_{1},\phi_{2},\ldots,$ be a sequence of  $C(\Gamma\times \Xi)$-valued random
elements such that $\phi_i\in K$, $i=1,2,\ldots$, and
 $r_N(\phi_N-\phi)\rightsquigarrow \phi_{0}$
in $C(\Gamma\times \Xi)$ for some random element $\phi_{0}$
and  some constants $r_N \to \infty$. Then
\[
V(\phi_N) = V(\phi) + V_{\phi}^{\prime}( \phi_N-\phi ) + o_p(r_N^{-1})
\]
and
\[
r_N(V(\phi_N)-V(\phi)) \rightsquigarrow V_{\phi}^{\prime}(\phi_{0}).
\]
\end{lemma}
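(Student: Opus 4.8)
The plan is to recast the claim as an application of the extended continuous mapping theorem to a rescaled family of maps, which turns Hadamard directional differentiability tangentially to $K$ into precisely the continuous-convergence hypothesis that theorem requires. For each $N$ I would define $g_N(h):=r_N\bigl(V(\phi+r_N^{-1}h)-V(\phi)\bigr)$ on the set $\{h:\phi+r_N^{-1}h\in K\}$, and set $Z_N:=r_N(\phi_N-\phi)$, so that $g_N(Z_N)=r_N(V(\phi_N)-V(\phi))$ is exactly the quantity of interest and $Z_N\dst\phi_0$ by hypothesis. A preliminary but essential step is to locate the support of $\phi_0$. Since $\phi_N,\phi\in K$ and $K$ is convex, for $0<t\le r_N^{-1}$ the point $\phi+tZ_N=(1-tr_N)\phi+tr_N\phi_N$ is a convex combination of elements of $K$ and hence lies in $K$; thus $Z_N\in\T_K(\phi)$ almost surely. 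As $K$ is closed and convex, $\T_K(\phi)$ is closed, and the portmanteau theorem then forces the weak limit $\phi_0$ to take values in $\T_K(\phi)$ with probability one, which is exactly the domain on which $V'_\phi$ is defined.

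The second step supplies the continuous-convergence condition. If $h_N\to h$ with $h\in\T_K(\phi)$ and $\phi+r_N^{-1}h_N\in K$, then taking $t=r_N^{-1}\downarrow 0$ and $\eta'=h_N\to h$ in the definition of Hadamard directional differentiability tangentially to $K$ yields $g_N(h_N)\to V'_\phi(h)$. This is precisely the hypothesis of the extended continuous mapping theorem (see, e.g., \citealp{vaart}); combined with $Z_N\dst\phi_0$ and the almost sure containment $\phi_0\in\T_K(\phi)$, it gives $g_N(Z_N)\dst V'_\phi(\phi_0)$, that is $r_N(V(\phi_N)-V(\phi))\dst V'_\phi(\phi_0)$, which is the second display.

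The expansion follows by subtracting off the derivative. Positive homogeneity of the directional derivative gives $V'_\phi(\phi_N-\phi)=r_N^{-1}V'_\phi(Z_N)$, so it suffices to show $\Delta_N:=g_N(Z_N)-V'_\phi(Z_N)=o_p(1)$. Because $V$ is Lipschitz, $V'_\phi(\cdot)$ is locally Lipschitz and hence continuous on $\T_K(\phi)$ (as noted before the proposition), so for $h_N\to h$ as above one has $g_N(h_N)-V'_\phi(h_N)\to V'_\phi(h)-V'_\phi(h)=0$. Applying the extended continuous mapping theorem to the maps $g_N-V'_\phi$ then gives $\Delta_N\dst 0$, hence $\Delta_N=o_p(1)$; dividing by $r_N$ produces $V(\phi_N)=V(\phi)+V'_\phi(\phi_N-\phi)+o_p(r_N^{-1})$.

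I expect the delicate point to be structural rather than computational. The directional derivative $V'_\phi$ is only positively homogeneous, not linear, so the ordinary functional delta method does not apply and one is forced to route everything through the extended continuous mapping theorem, verifying in passing that $\phi_0$ is supported in the tangent cone $\T_K(\phi)$; this is the one place where convexity and closedness of $K$ are genuinely used, and it is what makes the derivative $V'_\phi(\phi_0)$ well defined. The accompanying measure-theoretic bookkeeping of weak convergence in $C(\Gamma\times\Xi)$ is benign here, since $V$ is continuous (so each $g_N$ is continuous, hence measurable) and the domain is separable.
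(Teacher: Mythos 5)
Your proof is correct, but note that the paper itself contains no proof of this lemma: it is reproduced, with attribution, from \citet[Theorem 2.1]{shap1991}, so the relevant comparison is with that source. Your route is essentially van der Vaart's proof of the functional delta method via the extended continuous mapping theorem (\citealp[Theorems 18.11 and 20.8]{vaart}): you correctly verify the two points that make this work here, namely (a) that $Z_N:=r_N(\phi_N-\phi)$ lies in the radial cone of $K$ at $\phi$ by convexity (since $\phi+tZ_N$ is a convex combination of $\phi$ and $\phi_N$ for $t\le r_N^{-1}$), so that closedness of $\T_K(\phi)$ and the portmanteau theorem place $\phi_0$ in $\T_K(\phi)$ almost surely, and (b) that tangential Hadamard directional differentiability is exactly the continuous-convergence hypothesis for the maps $g_N$; your expansion step also correctly supplies the continuity of $V'_\phi$ on $\T_K(\phi)$ (via Lipschitz continuity of $V$), which the extended mapping argument for $g_N-V'_\phi$ needs. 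The proof in \citet{shap1991} is genuinely different: it invokes the Skorokhod--Dudley almost sure representation theorem (using separability of $C(\Gamma\times\Xi)$) to replace $r_N(\phi_N-\phi)\rightsquigarrow\phi_0$ by versions converging almost surely on a common probability space; the same convexity observation puts these versions in the radial cone, hence the a.s.\ limit in $\T_K(\phi)$, and the definition of the tangential Hadamard derivative is then applied pointwise, $\omega$-by-$\omega$, yielding both the weak convergence and the $o_p(r_N^{-1})$ expansion at once. The representation-theorem route buys a shorter argument with no continuous-convergence machinery and no separate continuity argument for $V'_\phi$; your route avoids the (nontrivial) representation theorem and stays entirely within weak-convergence calculus, at the cost of the extra verifications you carried out. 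Both approaches use separability of the space and use closedness and convexity of $K$ in exactly the same place, so the hypotheses of the lemma are exploited identically.
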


Recall that
$\vv^*=V(f)$ and $\hat{\vv}_N = V(\hat{f}_N)$, that $\Lambda(\gamma^*)$  denotes the  set of  Lagrange multiplier measures
$\mu=\sum_{i=1}^{k}\lambda_i \delta_{\xi_i}$
satisfying \eqref{optcon},   that $\F(\gamma,\xi)$ is a mean zero Gaussian random element with covariance function \eqref{covar} and that under Assumption \ref{ass-1},  $N^{1/2}(\hat{f}_N-f)\dst  \F$.
Proposition \ref{pr-dif1} together with the  functional delta theorem now imply the following asymptotics.

\begin{theorem}
Suppose that   Assumption  {\rm \ref{ass-1}}   holds, the set $\Gamma$ is convex and   for almost every $X$ and  every $\xi\in \Xi$ the function $F(X,\gamma,\xi)$ is convex in $\gamma$. Let $\Lambda^*$ be the set of Lagrange multipliers which  coincides with the set of optimal solutions of the dual problem.  Then
\[
 \hat{\vv}_N= \inf_{\gamma^*\in \Gamma^*} \sup_{\mu\in \Lambda^*}\sum_{i=1}^{n+1}\lambda_i \hat{f}_N(\gamma^*,\xi_i)+o_p(N^{-1/2}),
\]
and
\begin{equation}\label{asymval-2}
 N^{1/2}(\hat{\vv}_N-\vv^*)\dst \inf_{\gamma^*\in \Gamma^*} \sup_{\mu\in \Lambda^*}\sum_{i=1}^{n+1}\lambda_i \F(\gamma^*,\xi_i).
\end{equation}
\end{theorem}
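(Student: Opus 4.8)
The plan is to obtain both displays as an immediate consequence of the directional differentiability already established in Proposition \ref{pr-dif1} together with the functional delta method stated in the preceding lemma, followed by one short algebraic simplification exploiting the optimality conditions \eqref{optcon}. First I would check the hypotheses of Proposition \ref{pr-dif1}. Since $\Gamma$ is convex by assumption and, for almost every $X$ and every $\xi\in\Xi$, the function $F(X,\cdot,\xi)$ is convex, the expectation $f(\gamma,\xi)=\bbe[F(X,\gamma,\xi)]$ is convex in $\gamma$ as an integral of convex functions; hence $f\in\bar{\cC}$. Proposition \ref{pr-dif1} then applies and gives that $V(\cdot)$ is Hadamard directionally differentiable at $f$ tangentially to $\bar{\cC}$, with $V'_f(\eta)=\inf_{\gamma^*\in\Gamma^*}\sup_{\mu\in\Lambda^*}\sum_{i=1}^{n+1}\lambda_i\,\eta(\gamma^*,\xi_i)$.

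Second, I would verify the hypotheses of the functional delta method lemma with $\phi=f$, $\phi_N=\hat{f}_N$, $r_N=N^{1/2}$, $K=\bar{\cC}$, and $\phi_0=\F$. The tangential differentiability of $V$ at $f$ relative to $\bar{\cC}$ is in hand, and $\bar{\cC}$ is closed and convex as noted in the text. The requirement $\hat{f}_N\in\bar{\cC}$ holds because, for each fixed $\xi$, $\hat{f}_N(\gamma,\xi)=N^{-1}\sum_{j=1}^N F(X_j,\gamma,\xi)$ is a finite average of the functions $F(X_j,\cdot,\xi)$, each convex in $\gamma$ almost surely, so $\hat{f}_N(\cdot,\xi)$ is convex. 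Finally, Assumption \ref{ass-1} yields $N^{1/2}(\hat{f}_N-f)\dst\F$ in $C(\Gamma\times\Xi)$. Applying the lemma then produces the expansion $V(\hat{f}_N)=V(f)+V'_f(\hat{f}_N-f)+o_p(N^{-1/2})$ and the weak limit $N^{1/2}(V(\hat{f}_N)-V(f))\dst V'_f(\F)$.

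Third, I would convert these into the stated forms. The second display is immediate: $N^{1/2}(\hat{\vv}_N-\vv^*)\dst V'_f(\F)=\inf_{\gamma^*\in\Gamma^*}\sup_{\mu\in\Lambda^*}\sum_{i=1}^{n+1}\lambda_i\,\F(\gamma^*,\xi_i)$, which is \eqref{asymval-2}. For the first display the only point needing care is replacing $\hat{f}_N-f$ by $\hat{f}_N$ inside the derivative. Here I would invoke the optimality conditions \eqref{optcon}: for any $\mu=\sum_i\lambda_i\delta_{\xi_i}\in\Lambda^*$ and any $\gamma^*\in\Gamma^*$, each $\xi_i$ is a maximizer, so $f(\gamma^*,\xi_i)=\sup_{\xi\in\Xi}f(\gamma^*,\xi)=\vv^*$, and since $\sum_i\lambda_i=1$ this gives $\sum_i\lambda_i f(\gamma^*,\xi_i)=\vv^*$, a constant independent of $\mu$ and $\gamma^*$. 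Consequently $\sum_i\lambda_i(\hat{f}_N-f)(\gamma^*,\xi_i)=\sum_i\lambda_i\hat{f}_N(\gamma^*,\xi_i)-\vv^*$; the constant pulls out of both the inner supremum and the outer infimum, and adding $V(f)=\vv^*$ cancels it, yielding $\hat{\vv}_N=\inf_{\gamma^*\in\Gamma^*}\sup_{\mu\in\Lambda^*}\sum_{i=1}^{n+1}\lambda_i\hat{f}_N(\gamma^*,\xi_i)+o_p(N^{-1/2})$.

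The main obstacle, and really the only step beyond bookkeeping, is ensuring that the functional delta method applies in its tangential form, i.e. confirming that $\hat{f}_N\in\bar{\cC}$ so that the increments $\hat{f}_N-f$ remain in the tangent cone along which $V$ is differentiable; this is precisely why convexity of $F(X,\cdot,\xi)$ pointwise in $X$ is assumed, rather than convexity of $f$ alone, since the former is inherited by every empirical average $\hat{f}_N$ whereas the latter would not guarantee $\hat{f}_N\in\bar{\cC}$. The convexity structure also ensures, through Proposition \ref{pr-dif1} and the cited results of \cite{BS}, that $\Lambda^*$ is the common, nonempty, bounded set of dual optimal solutions, so the iterated infimum over $\gamma^*$ of suprema over $\Lambda^*$ is well defined and the limit is a genuine finite-valued functional of the Gaussian element $\F$.
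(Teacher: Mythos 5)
Your proposal is correct and takes essentially the same route as the paper, which obtains this theorem directly by combining Proposition \ref{pr-dif1} with the functional delta method lemma. The details you supply—checking that $\hat{f}_N\in\bar{\cC}$ so the tangential form of the delta method applies, and the complementary-slackness argument ($\sum_i\lambda_i f(\gamma^*,\xi_i)=\vv^*$) that lets you replace $\hat{f}_N-f$ by $\hat{f}_N$ in the first display—are exactly the steps the paper leaves implicit.
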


Without the convexity condition,
by Proposition \ref{pr-dif2} and the  functional delta theorem we have the following asymptotics.

\begin{theorem}
Suppose that   Assumptions {\rm  \ref{ass-1}} and  {\rm \ref{ass-1a}}  hold, and  $\Lambda(\gamma^*)=\{\mu(\gamma^*)\}$    is the singleton for every $\gamma^*\in \Gamma^*$. Then
\[
 \hat{\vv}_N= \inf_{\gamma^*\in \Gamma^*}  \sum_{i=1}^{n+1} \lambda_i (\gamma^*)\hat{f}_N(\gamma^*,\xi_i(\gamma^*))+o_p(N^{-1/2}),
\]
and
\begin{equation}\label{asymval-2a}
 N^{1/2}(\hat{\vv}_N-\vv^*)\dst \inf_{\gamma^*\in \Gamma^*} \sum_{i=1}^{n+1}\lambda_i (\gamma^*) \F(\gamma^*,\xi_i(\gamma^*)).
\end{equation}
 \end{theorem}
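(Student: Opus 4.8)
The plan is to derive this theorem as a direct consequence of the Hadamard directional differentiability established in Proposition \ref{pr-dif2} together with the functional delta method stated in the preceding lemma. Under Assumptions \ref{ass-1} and \ref{ass-1a} and the singleton-multiplier hypothesis, Proposition \ref{pr-dif2} guarantees that $V(\cdot)$ is Hadamard directionally differentiable at $f$ in the full sense (not merely tangentially to a proper subset), with derivative
\[
V'_f(\eta) = \inf_{\gamma^*\in \Gamma^*} \sum_{i=1}^{n+1}\lambda_i(\gamma^*)\,\eta(\gamma^*,\xi_i(\gamma^*)).
\]
Because no tangential restriction is needed here, I would invoke the functional delta method with the closed convex set $K$ taken to be the whole space $C(\Gamma\times \Xi)$, which trivially contains every $\hat{f}_N$ and satisfies $\T_K(f)=C(\Gamma\times\Xi)$. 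This is precisely the feature that makes the argument cleaner than in the convex case of the previous theorem, where one was forced to take $K=\bar{\cC}$ and to verify $\hat{f}_N\in\bar{\cC}$.

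Next I would recall that under Assumption \ref{ass-1} we have $N^{1/2}(\hat{f}_N-f)\dst \F$ in $C(\Gamma\times\Xi)$, with $\F$ the mean-zero Gaussian element of covariance \eqref{covar}. Applying the functional delta method with $\phi=f$, $\phi_N=\hat{f}_N$, $r_N=N^{1/2}$, and limiting element $\F$ then yields simultaneously the linearization
\[
\hat{\vv}_N = \vv^* + V'_f(\hat{f}_N-f) + o_p(N^{-1/2})
\]
and the weak convergence $N^{1/2}(\hat{\vv}_N-\vv^*)\dst V'_f(\F)$. Substituting the explicit form of $V'_f$ into the latter gives \eqref{asymval-2a} immediately, with no centering term to track, since $V'_f(\F)=\inf_{\gamma^*\in\Gamma^*}\sum_{i=1}^{n+1}\lambda_i(\gamma^*)\F(\gamma^*,\xi_i(\gamma^*))$.

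It remains only to rewrite the linearization in the form stated, the one genuine computation. The key observation is that for every $\gamma^*\in\Gamma^*$ the support points $\xi_i(\gamma^*)$ of the unique Lagrange measure satisfy $\xi_i(\gamma^*)\in\Xi^*(\gamma^*)$ by the optimality conditions \eqref{optcon}, whence $f(\gamma^*,\xi_i(\gamma^*))=\sup_{\xi\in\Xi}f(\gamma^*,\xi)=\vv^*$; combined with $\sum_{i=1}^{n+1}\lambda_i(\gamma^*)=1$ this gives $\sum_i\lambda_i(\gamma^*)f(\gamma^*,\xi_i(\gamma^*))=\vv^*$ for each $\gamma^*$. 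Therefore
\[
V'_f(\hat{f}_N-f) = \inf_{\gamma^*\in\Gamma^*}\sum_{i=1}^{n+1}\lambda_i(\gamma^*)\,\hat{f}_N(\gamma^*,\xi_i(\gamma^*)) - \vv^*,
\]
and adding $\vv^*$ recovers the first display of the theorem. I do not anticipate any serious obstacle, as the substantive content is already packaged in Proposition \ref{pr-dif2} and the delta method; the points to check carefully are that the delta-method hypotheses hold with $K$ the entire space, and that the multipliers and support points $\lambda_i(\gamma^*),\xi_i(\gamma^*)$ appearing in Proposition \ref{pr-dif2} are the same objects constrained by \eqref{optcon}, so that the cancellation to $\vv^*$ is legitimate.
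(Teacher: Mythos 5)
Your proposal is correct and follows exactly the route the paper takes: the paper states this theorem as an immediate consequence of Proposition \ref{pr-dif2} and the functional delta method lemma, which is precisely your argument. The details you supply---taking $K=C(\Gamma\times\Xi)$ so the tangential restriction is vacuous, and using $\xi_i(\gamma^*)\in\Xi^*(\gamma^*)$ from \eqref{optcon} together with $\sum_i\lambda_i(\gamma^*)=1$ to cancel the centering term to $\vv^*$---are exactly the steps the paper leaves implicit, and they check out.
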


The asymptotic distributions
\eqref{asymval-2} and \eqref{asymval-2a}
are typically non-Gaussian and rather complicated when there are multiple solutions in $\Gamma^*$   and/or multiple Lagrange multipliers.
However, when both
$\Gamma^*=\{\gamma^*\}$ and $\Lambda(\gamma^*)=\{\mu^*\}$ are singletons with
$\mu^* = \sum_{i=1}^{k} \lambda_i^* \delta_{\xi_i^*}$, we have the following.

\begin{corollary}
Suppose that   Assumptions {\rm  \ref{ass-1}} and  {\rm \ref{ass-1a}}  hold, and $\Gamma^*=\{\gamma^*\}$ and $\Lambda(\gamma^*)=\{\mu^*\}$ are singletons.
Then $N^{1/2}(\hat{\vv}_N-\vv^*)$ converges in distribution to a Gaussian random variable $\N(0,\sigma^2)$ with
\[\textstyle
\sigma^2=\var\left[ \sum\limits_{i=1}^{n+1}\lambda^*_i F(X,\gamma^*,\xi^*_i)\right].
\]
\end{corollary}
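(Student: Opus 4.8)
The plan is to specialize the preceding theorem (whose conclusion is \eqref{asymval-2a}) to the singleton case and then read off the distribution from the Gaussianity of $\F$. Since $\Gamma^*=\{\gamma^*\}$, the outer infimum over $\gamma^*\in\Gamma^*$ in \eqref{asymval-2a} is over a single point and hence disappears; writing $\mu^*=\sum_{i=1}^{n+1}\lambda_i^*\delta_{\xi_i^*}$ (padding with zero weights if fewer than $n+1$ atoms are needed, as the definition of $\Lambda(\gamma^*)$ allows), that theorem gives directly
\[
N^{1/2}(\hat{\vv}_N-\vv^*)\dst \sum_{i=1}^{n+1}\lambda_i^*\,\F(\gamma^*,\xi_i^*).
\]
It then remains only to identify the law of the right-hand side.

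First I would observe that the limit is Gaussian with mean zero. Because $\F$ is a mean-zero Gaussian element of $C(\Gamma\times\Xi)$, the finite-dimensional vector $(\F(\gamma^*,\xi_1^*),\ldots,\F(\gamma^*,\xi_{n+1}^*))$ is multivariate normal with mean zero, and any fixed linear combination of jointly normal variables is again univariate normal. Hence $\sum_{i=1}^{n+1}\lambda_i^*\F(\gamma^*,\xi_i^*)\sim\N(0,\sigma^2)$ for some $\sigma^2\ge 0$.

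Finally I would compute $\sigma^2$ from the covariance function \eqref{covar}. By bilinearity of covariance,
\[
\sigma^2=\sum_{i=1}^{n+1}\sum_{j=1}^{n+1}\lambda_i^*\lambda_j^*\,\cov[\F(\gamma^*,\xi_i^*),\F(\gamma^*,\xi_j^*)]
=\sum_{i=1}^{n+1}\sum_{j=1}^{n+1}\lambda_i^*\lambda_j^*\,\cov[F(X,\gamma^*,\xi_i^*),F(X,\gamma^*,\xi_j^*)],
\]
where the second equality substitutes \eqref{covar}. Recognizing the double sum as the variance of the single random variable $\sum_{i=1}^{n+1}\lambda_i^*F(X,\gamma^*,\xi_i^*)$ yields the stated formula for $\sigma^2$.

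I do not anticipate a genuine obstacle here, as the corollary is a direct specialization of \eqref{asymval-2a}; the only steps needing care are the collapse of the infimum over the singleton $\Gamma^*$ and the substitution of the process covariance \eqref{covar} into the variance of the finite linear combination. It may also be worth noting that $\sigma^2$ is independent of the particular $(n+1)$-atom representation of $\mu^*$ supplied by Carath\'eodory's theorem, since appending atoms with zero weight does not alter the sum.
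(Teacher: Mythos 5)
Your proof is correct and takes essentially the same route as the paper: the corollary is an immediate specialization of \eqref{asymval-2a} to the singleton case, after which the limit $\sum_{i=1}^{n+1}\lambda_i^*\F(\gamma^*,\xi_i^*)$ is Gaussian as a linear combination of a mean-zero Gaussian process, with variance obtained by substituting the covariance function \eqref{covar}. Your added remarks (padding with zero weights to reach $k=n+1$ atoms, and invariance of $\sigma^2$ under the representation of $\mu^*$) are consistent with the paper's definition of $\Lambda(\gamma^*)$ and do not change the argument.
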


The results we have given in this Section describe the limiting distribution of the optimal value $\hat{\vv}_N$ and complement the earlier result obtained by \citet{sha08} in the convex-concave
case.

\setcounter{equation}{0}
\section{Asymptotics of optimal solutions}
\label{sec-sol}

We now turn to the more delicate question of asymptotics for the optimal solutions $\hat{\gamma}_N$.

\subsection{Preliminaries}

Asymptotics of optimal solutions $\hat{\gamma}_N$ of problem \eqref{intro-minimax-sam} are governed by  second-order properties of the minimax problem, and involve considerably stronger regularity conditions.
We make the following assumption.

\begin{assumption} \label{ass-2nd-1}
\mbox{}\vspace*{-6pt}
\begin{mylist}{(ii)}
\item [{(i)}] $\Gamma^* = \{\gamma^*\}$  is the singleton and $\Xi^*(\gamma^*) = \{\xi_1^*, \ldots ,\xi^*_k\}$ is finite.
\item [{(ii)}] The points $\gamma^*$ and $\xi_1^*, \ldots ,\xi^*_k$ are interior points of the respective sets $\Gamma$ and $\Xi$.
\end{mylist}
\end{assumption}

Part (i) of this assumption is admittedly quite restrictive but, together with part (ii), it allows us to write the original minimax problem \eqref{intro-minimax-pop} as a finite minimax problem.
This in turn makes it possible to reformulate the problem as a standard nonlinear programming problem with a finite number of inequality constraints.

With this goal in mind,
let $\cN$ be a compact and convex neighborhood of $\gamma^*$
that is contained in the interior of the set $\Gamma$.
Moreover, for $i=1,\ldots,k$, let
$\Xi_i^*$ be a compact neighborhood of $\xi_i^*$ such that
$\Xi_1^*,\ldots,\Xi_k^*$ are disjoint subsets of $\Xi$.
Define
\begin{equation}\label{varphi-2}
\varphi_i(\gamma) := \sup_{\xi\in \Xi_i^*} f(\gamma,\xi) = \sup_{\xi\in \Xi_i^*} \bbe[ F(X,\gamma,\xi) ],\quad i=1,\ldots,k.
\end{equation}
It now follows that the function
\[
 \varphi(\gamma):=\sup_{\xi\in \Xi} f(\gamma,\xi)
\]
can, by choosing $\cN$ sufficiently small, be for $\gamma\in \cN$ represented as
\[
   \varphi(\gamma) =\max \left \{\varphi_1(\gamma), \ldots ,\varphi_k(\gamma)\right\}.
\]
This approach is motivated by the so-called reduction method used in the second-order analysis of semi-infinite programs (cf.,  \citealp{HK1993}).

Therefore, under Assumption \ref{ass-2nd-1}, we can write the original population minimax problem \eqref{intro-minimax-pop} as the finite minimax problem
\begin{equation}\label{repr-2}
   \min_{\gamma\in \cN} \varphi(\gamma)
   = \min_{\gamma\in \cN}
   	\max \left \{\varphi_1(\gamma), \ldots ,\varphi_k(\gamma)\right\}.
\end{equation}
Note that
\[
 \varphi_1(\gamma^*)= \ldots = \varphi_k(\gamma^*)=\varphi(\gamma^*)=\vv^* .
\]

The empirical counterparts of functions $\varphi_i$  and $\varphi$  are
\begin{eqnarray}
\label{phiemp-1}
\hat{\varphi}_{iN}(\gamma)&:=&\sup_{\xi\in \Xi_i^*} \hat{f}_N(\gamma,\xi)
=\sup_{\xi\in \Xi_i^*} N^{-1}\tsum_{j=1}^N F(X_j,\gamma,\xi),  \quad i=1,\ldots,k,\\
\notag
\hat{\varphi}_{N}(\gamma)&:=& \max \left \{\hat{\varphi}_{1N}(\gamma), \ldots ,\hat{\varphi}_{kN}(\gamma)\right\}.
\end{eqnarray}
The sample minimax problem \eqref{intro-minimax-sam} can be written as
\begin{equation}\label{minimax-sam}
\min_{\gamma\in \Gamma} \hat{\varphi}_{N} (\gamma)
= \min_{\gamma\in \Gamma}
	\max \left \{\hat{\varphi}_{1N}(\gamma), \ldots ,\hat{\varphi}_{kN}(\gamma)\right\}
\end{equation}
and
\[
\hat{\vv}_N
= \inf_{\gamma\in  \cN} \hat{\varphi}_{N}(\gamma)
= \hat{\varphi}_N(\hat{\gamma}_N).
\]

Our subsequent analysis relies on second-order properties of the above problems, and for that  appropriate differentiability properties are needed.
In our next assumption, we require that $F(X,\gamma,\xi)$ is twice continuously differentiable in $(\gamma,\xi)$, and denote by $\nabla^2 F(X,\gamma,\xi)$ the respective $(n+m)\times (n+m)$  Hessian matrix of second-order partial  derivatives with respect to $(\gamma,\xi)$.

\begin{assumption} \label{ass-sol-differentiability}
\mbox{}\vspace*{-6pt}
\begin{mylist}{(iii)}
\item [{(i)}] For almost every $X$ the function $F(X,\gamma,\xi)$ is twice continuously differentiable in $(\gamma,\xi)$.
\item [{(ii)}] There is an integrable function $C(X)$ such that  almost surely     $\|\nabla^2 F(X,\gamma,\xi)\|\le C(X)$
for all $(\gamma,\xi)\in\Gamma\times\Xi$.
\item [{(iii)}] The matrices $\bbe[\nabla^2_{\xi\xi} F(X,\gamma^*,\xi_i^*)]$, $i=1,\ldots,k$,  are nonsingular.
\end{mylist}
\end{assumption}

It follows from Assumption \ref{ass-sol-differentiability} that the function $f(\gamma,\xi)$ is twice continuously differentiable with
\[
 \nabla^2 f(\gamma,\xi)=\bbe[\nabla^2 F(X,\gamma,\xi)]
\]
and that the Hessian matrices $\nabla_{\xi\xi}^2 f(\gamma^*,\xi_i^*)$, $i=1,\ldots,k$,  are nonsingular.
Moreover, it also follows that $\hat{f}_N(\gamma,\xi)$ is twice continuously differentiable.
By the uniform LLN, we also obtain the following uniform convergence results
 \begin{eqnarray}
 \label{LLN0}
 && \sup_{(\gamma,\xi)\in \Gamma\times \Xi} |  \hat{f}_N(\gamma,\xi)-  f (\gamma,\xi) | \to 0\;{\rm w.p.1}.\\
 \label{LLN1}
 && \sup_{(\gamma,\xi)\in \Gamma\times \Xi} \| \nabla  \hat{f}_N(\gamma,\xi)-\nabla  f (\gamma,\xi) \|\to 0\;{\rm w.p.1}.\\
  \label{LLN2}
&&  \sup_{(\gamma,\xi)\in \Gamma\times \Xi} \|\nabla^2 \hat{f}_N(\gamma,\xi)-\nabla^2 f (\gamma,\xi)\|\to 0\;{\rm w.p.1}.
 \end{eqnarray}
Furthermore, by the CLT we obtain that
$N^{1/2} \bigl( \nabla \hat{f}_N(\gamma^*,\xi_i^*)-\nabla  f (\gamma^*,\xi_i^*) \bigr)$
converges in distribution and thus
\begin{equation}
\label{CLT-f}
\|\nabla \hat{f}_N(\gamma^*,\xi_i^*)-\nabla  f (\gamma^*,\xi_i^*)\| = O_p(N^{-1/2}).
\end{equation}

Analogous differentiability properties are also needed for the functions
$\varphi_i(\gamma)$ and $\hat{\varphi}_{iN}(\gamma)$, $\gamma\in \cN$, $i=1,\ldots,k$,
defined in  \eqref{varphi-2} and \eqref{phiemp-1}.
These are established in the following lemma.

\begin{lemma}
\label{lem-secorder1}
Suppose that Assumptions {\rm \ref{ass-2nd-1}--\ref{ass-sol-differentiability}} are satisfied. Then the following results hold for $i=1,\ldots,k$ (where the neighborhood $\cN$ is redefined if necessary).
\vspace*{-6pt}
\begin{mylist}{(iii)}
\item [{(i)}] The function $\varphi_i(\cdot)$ is twice continuously differentiable on $\cN$ with
\begin{eqnarray}
\hspace*{-30pt} \nabla \varphi_i(\gamma^*)
   &=& \nabla_\gamma  f(\gamma^*,\xi_i^*), \label{gradf} \\
\hspace*{-30pt} \nabla^2 \varphi_i(\gamma^*)
   &=& \nabla_{\gamma\gamma}^2 f(\gamma^*,\xi_i^*)-
   \bigl[\nabla_{\gamma\xi}^2 f(\gamma^*,\xi_i^*)\bigr ]
   \bigl [\nabla_{\xi\xi}^2 f(\gamma^*,\xi_i^*)\bigr ]^{-1}
   \bigl [\nabla_{\xi\gamma}^2 f(\gamma^*,\xi_i^*)\bigr ].
   \label{hess-a}
\end{eqnarray}
\item [{(ii)}] With probability one for $N$ large enough, the  function $\hat{\varphi}_{iN}(\cdot)$ is twice  continuously differentiable on $\cN$.
\item [{(iii)}] The following convergence results hold:
\begin{eqnarray}
\label{uniform-0}
&& \sup_{\gamma\in \cN} |  \hat{\varphi}_{iN}(\gamma)- \varphi_{i}(\gamma) |  \to 0\;{\rm w.p.1},\\
\label{uniform-1}
&& \sup_{\gamma\in \cN} \|\nabla  \hat{\varphi}_{iN}(\gamma)- \nabla  \varphi_{i}(\gamma)\|  \to 0\;{\rm w.p.1},\\
\label{uniform-2}
&& \sup_{\gamma\in \cN} \|\nabla^2 \hat{\varphi}_{iN}(\gamma)- \nabla^2 \varphi_{i}(\gamma)\|  \to 0\;{\rm w.p.1},\\
\label{conv-dist-alt}
&& \nabla \hat{\varphi}_{iN}(\gamma^*)
	=\nabla_\gamma \hat{f}_{N}(\gamma^*,\xi_i^*) + o_p(N^{-1/2}), \\
\label{bound-in-prob-1}
&& \|\nabla \hat{\varphi}_{iN}(\gamma^*)-\nabla \varphi_i(\gamma^*)\| = O_p(N^{-1/2}).
\end{eqnarray}
\end{mylist}
\end{lemma}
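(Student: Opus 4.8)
The plan is to realize each branch $\varphi_i$ as the optimal value of a smooth maximization over $\xi$ whose maximizer is interior and isolated, so that the implicit function theorem and the envelope theorem apply; the empirical statements then follow by transferring this construction to $\hat f_N$ with the uniform laws \eqref{LLN0}--\eqref{LLN2} and the CLT \eqref{CLT-f}.

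For part (i), I would first note that since $\xi_i^*$ is an interior maximizer of $f(\gamma^*,\cdot)$ (Assumption~\ref{ass-2nd-1}) the stationarity condition $\nabla_\xi f(\gamma^*,\xi_i^*)=0$ holds, and the second-order necessary condition together with the nonsingularity in Assumption~\ref{ass-sol-differentiability}(iii) forces $\nabla^2_{\xi\xi}f(\gamma^*,\xi_i^*)$ to be negative definite. Applying the implicit function theorem to $\nabla_\xi f(\gamma,\xi)=0$ at $(\gamma^*,\xi_i^*)$ then yields, after shrinking $\cN$, a $C^1$ map $\gamma\mapsto\xi_i(\gamma)$ with $\xi_i(\gamma^*)=\xi_i^*$ furnishing the unique maximizer of $f(\gamma,\cdot)$ over $\Xi_i^*$ for $\gamma\in\cN$, so that $\varphi_i(\gamma)=f(\gamma,\xi_i(\gamma))$. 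The envelope theorem gives $\nabla\varphi_i(\gamma)=\nabla_\gamma f(\gamma,\xi_i(\gamma))$, which is again $C^1$; this proves $\varphi_i\in C^2$ and, at $\gamma^*$, yields \eqref{gradf}. Differentiating $\nabla_\xi f(\gamma,\xi_i(\gamma))=0$ gives $\nabla\xi_i(\gamma)=-[\nabla^2_{\xi\xi}f]^{-1}\nabla^2_{\xi\gamma}f$, and substituting into $\nabla^2\varphi_i=\nabla^2_{\gamma\gamma}f+\nabla^2_{\gamma\xi}f\,\nabla\xi_i$ produces the Schur-complement formula \eqref{hess-a}.

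For parts (ii)--(iii) I would run the same construction for $\hat f_N$ (which is $C^2$ by Assumption~\ref{ass-sol-differentiability}). By \eqref{LLN2} the Hessian $\nabla^2_{\xi\xi}\hat f_N$ converges uniformly to $\nabla^2_{\xi\xi}f$, so with probability one for $N$ large it is negative definite near $\xi_i^*$, and by \eqref{LLN0} and compactness of $\Xi_i^*$ the maximizer of $\hat f_N(\gamma,\cdot)$ over $\Xi_i^*$ is interior and unique; the implicit function theorem then gives a $C^2$ map $\hat\xi_{iN}(\cdot)$, proving (ii). The bound \eqref{uniform-0} is immediate because taking a supremum is nonexpansive, whence $|\hat\varphi_{iN}-\varphi_i|\le\sup_\xi|\hat f_N-f|$. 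Comparing the two stationarity conditions $\nabla_\xi\hat f_N(\gamma,\hat\xi_{iN}(\gamma))=0=\nabla_\xi f(\gamma,\xi_i(\gamma))$ through the mean value theorem and the uniformly nonsingular Hessian yields $\sup_{\gamma\in\cN}\|\hat\xi_{iN}(\gamma)-\xi_i(\gamma)\|\le C\sup\|\nabla\hat f_N-\nabla f\|\to0$; feeding this into the envelope identity $\nabla\hat\varphi_{iN}=\nabla_\gamma\hat f_N(\cdot,\hat\xi_{iN}(\cdot))$ with \eqref{LLN1} gives \eqref{uniform-1}, and into \eqref{hess-a} with \eqref{LLN2} and continuity of matrix inversion gives \eqref{uniform-2}.

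The hard part will be the first-order expansion \eqref{conv-dist-alt}. The envelope identity gives $\nabla\hat\varphi_{iN}(\gamma^*)=\nabla_\gamma\hat f_N(\gamma^*,\hat\xi_{iN}(\gamma^*))$, so the task is to compare the gradient at the empirical maximizer $\hat\xi_{iN}(\gamma^*)$ with that at the population maximizer $\xi_i^*$. From the empirical stationarity condition and \eqref{CLT-f} I would derive the Bahadur-type relation $\hat\xi_{iN}(\gamma^*)-\xi_i^*=-[\nabla^2_{\xi\xi}f(\gamma^*,\xi_i^*)]^{-1}\nabla_\xi\hat f_N(\gamma^*,\xi_i^*)+o_p(N^{-1/2})$, whence $\|\hat\xi_{iN}(\gamma^*)-\xi_i^*\|=O_p(N^{-1/2})$. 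A Taylor expansion of $\nabla_\gamma\hat f_N(\gamma^*,\cdot)$ about $\xi_i^*$ reduces the discrepancy to the leading term $\nabla^2_{\gamma\xi}f(\gamma^*,\xi_i^*)(\hat\xi_{iN}(\gamma^*)-\xi_i^*)$ plus a remainder that is genuinely negligible (the quadratic Taylor term is $O_p(N^{-1})$, and splitting $\hat f_N=f+(\hat f_N-f)$ shows the empirical-process increment over the shrinking $O_p(N^{-1/2})$-ball is $o_p(N^{-1/2})$ by stochastic equicontinuity). The crux is therefore to control this argmax-displacement term, which is a priori of order $O_p(N^{-1/2})$; establishing \eqref{conv-dist-alt} amounts to showing its contribution is in fact negligible at the stated scale. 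Once \eqref{conv-dist-alt} is in hand, \eqref{bound-in-prob-1} follows immediately by combining it with \eqref{CLT-f} and the triangle inequality.
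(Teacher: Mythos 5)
Your parts (i) and (ii) and the uniform convergence claims \eqref{uniform-0}--\eqref{uniform-2} follow essentially the paper's own route (implicit function theorem at the interior stationary point, Danskin/envelope formula, chain rule for the Schur complement, uniform LLNs); your stationarity-comparison derivation of $\|\hat\xi_{iN}(\gamma^*)-\xi_i^*\|=O_p(N^{-1/2})$ differs in technique from the paper, which obtains the rate from quadratic growth of $f(\gamma^*,\cdot)$ at $\xi_i^*$ together with \citet[Proposition 4.32]{BS}, but both arguments are valid. The real issue is \eqref{conv-dist-alt}, where you stop at what you call the crux. This is a genuine gap, and in fact it cannot be closed: the argmax-displacement term is \emph{not} negligible. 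Combining your own Bahadur relation with your Taylor expansion gives
\[
\nabla \hat{\varphi}_{iN}(\gamma^*)
 = \nabla_\gamma \hat{f}_{N}(\gamma^*,\xi_i^*)
 - \nabla^2_{\gamma\xi} f(\gamma^*,\xi_i^*)
   \bigl[\nabla^2_{\xi\xi} f(\gamma^*,\xi_i^*)\bigr]^{-1}
   \nabla_\xi \hat{f}_{N}(\gamma^*,\xi_i^*) + o_p(N^{-1/2}),
\]
and since $\nabla_\xi f(\gamma^*,\xi_i^*)=0$, the CLT makes the correction term of exact order $N^{-1/2}$ whenever $\nabla^2_{\gamma\xi} f(\gamma^*,\xi_i^*)\neq 0$ and $\var[\nabla_\xi F(X,\gamma^*,\xi_i^*)]$ is nondegenerate. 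A concrete instance: $n=m=1$, $\Gamma=\Xi=[-1,1]$, $F(X,\gamma,\xi)=-\tfrac12\xi^2+\gamma\xi+X\xi$ with $X\sim\N(0,1)$. Then Assumptions \ref{ass-2nd-1} and \ref{ass-sol-differentiability} hold with $\gamma^*=\xi_1^*=0$, $k=1$, and w.p.1 for $N$ large $\hat{\varphi}_{1N}(\gamma)=\tfrac12(\gamma+\bar{X}_N)^2$ near $\gamma^*$, so $\nabla\hat{\varphi}_{1N}(\gamma^*)=\bar{X}_N$ while $\nabla_\gamma\hat{f}_N(\gamma^*,\xi_1^*)=0$; their difference is not $o_p(N^{-1/2})$.

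You should also know that the paper's own proof does not close this gap either: after establishing the $O_p(N^{-1/2})$ rate, it applies the mean value theorem to the centered process $\nabla_\gamma\hat{f}_N(\gamma^*,\cdot)-\nabla_\gamma f(\gamma^*,\cdot)$ together with \eqref{LLN2} (the text even misprints this function as an identically zero difference), which controls only the increment $[\nabla_\gamma\hat{f}_N-\nabla_\gamma f](\gamma^*,\hat\xi^*_{iN})-[\nabla_\gamma\hat{f}_N-\nabla_\gamma f](\gamma^*,\xi_i^*)$; the remaining deterministic increment $\nabla_\gamma f(\gamma^*,\hat\xi^*_{iN})-\nabla_\gamma f(\gamma^*,\xi_i^*)$ is precisely the displacement term you isolated, and in the example above it equals $\bar{X}_N+o_p(N^{-1/2})$. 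So \eqref{conv-dist-alt} as stated holds only under extra conditions (e.g.\ $\nabla^2_{\gamma\xi}f(\gamma^*,\xi_i^*)=0$, or $\xi_i^*$ an isolated point of $\Xi$ as in the paper's later remark), and otherwise the display above is the correct replacement; the covariance $\Sigma$ identified after \eqref{zeta} and used for Theorem \ref{th-gamma} then requires the same correction. Note finally that \eqref{bound-in-prob-1} is unaffected, since the correction term is still $O_p(N^{-1/2})$: it follows directly from the display together with \eqref{CLT-f} and \eqref{gradf}, with no need for \eqref{conv-dist-alt}.
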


\begin{proof} (i)
As $\xi_i^*$ is an interior point of $\Xi_i^*$ and the (unique) maximizer of $f(\gamma^*,\xi)$ over $\xi\in \Xi_i^*$,  by the first-order optimality condition $\nabla_\xi f(\gamma^*,\xi_i^*)=0$.
Recall that $\nabla_{\xi\xi}^2 f(\gamma^*,\xi_i^*)$ is nonsingular.
It now follows from the Implicit Function Theorem
(e.g., \citealp[Theorem 2.4.1]{Fiacco1983})
that for all $\gamma$ in a neighborhood of $\gamma^*$ the equation $\nabla_\xi f(\gamma,\cdot)=0$ has a unique solution $\xi_i(\gamma)$ over $\xi\in \Xi^*_i$,
that $f(\gamma,\xi)$ has a unique maximizer over $\xi\in \Xi^*_i$ at this $\xi_i(\gamma)$,
$\xi_i(\gamma^*) = \xi_i^*$,
and that $\xi_i(\gamma)$ is continuously differentiable.
Danskin's theorem (e.g., \citealp[Theorem 4.13]{BS}) now implies that
$\varphi_i(\cdot)$ is differentiable in a neighborhood of  $\gamma^*$
with $\nabla \varphi_i(\gamma) = \nabla_\gamma  f(\gamma,\xi_i(\gamma))$.
This establishes \eqref{gradf}.
Taking derivatives of $\nabla_\xi f(\gamma,\xi_i(\gamma))=0$ and
$\nabla \varphi_i(\gamma) = \nabla_\gamma  f(\gamma,\xi_i(\gamma))$
with respect to $\gamma$ and using the chain rule yields
\begin{eqnarray*}
&& \nabla^2_{\gamma\xi} f(\gamma,\xi_i(\gamma))
+ \nabla \xi_i(\gamma)  \nabla^2_{\xi\xi} f(\gamma,\xi_i(\gamma)) =0 \quad\text{and}\\
&& \nabla^2 \varphi_i(\gamma) = \nabla^2_{\gamma\gamma}  f(\gamma,\xi_i(\gamma)) +
\nabla \xi_i(\gamma) \nabla^2_{\xi\gamma}  f(\gamma,\xi_i(\gamma)) .
\end{eqnarray*}
Therefore in a neighborhood of $\gamma^*$,
\[
\nabla^2 \varphi_i(\gamma)
   = \nabla_{\gamma\gamma}^2 f(\gamma,\xi_i(\gamma))-
   \bigl[\nabla_{\gamma\xi}^2 f(\gamma,\xi_i(\gamma))\bigr ]
   \bigl [\nabla_{\xi\xi}^2 f(\gamma,\xi_i(\gamma))\bigr ]^{-1}
   \bigl [\nabla_{\xi\gamma}^2 f(\gamma,\xi_i(\gamma))\bigr ],
\]
establishing \eqref{hess-a} (cf. \citealp[Lemma 2.2]{sha1985}).

(ii) Since $\xi_i^*$ is assumed to be an interior point of $\Xi$,
we have that w.p.1 for $N$ large enough a point
$\hat{\xi}_{iN}^* \in \arg\max_{\xi \in \Xi^*_i}\hat{f}_N(\gamma^*,\xi)$
is an interior point of $\Xi$ and hence $\nabla_\xi \hat{f}_N(\gamma^*, \hat{\xi}_{iN}^* )=0$.
Moreover, as $\xi_i^*$ is the unique maximizer of $f(\gamma^*,\xi)$ over $\xi\in \Xi_i^*$,
uniform convergence result \eqref{LLN0} implies that $\hat\xi^*_{iN} \to \xi_i^*$ w.p.1 as $N\to\infty$.
By Assumption \ref{ass-sol-differentiability}(iii) and the LLN we also have that w.p.1 for $N$ large enough the Hessian matrix $\nabla_{\xi\xi}^2 \hat{f}_N(\gamma^*,\xi_i^*)$ is nonsingular,
and therefore also $\nabla_{\xi\xi}^2 \hat{f}_N(\gamma^*,\hat{\xi}_{iN}^* )$ is nonsingular w.p.1 for $N$ large enough.
We can now apply the Implicit Function Theorem as in part (i), and conclude that
w.p.1 for $N$ large enough and
for all $\gamma$ in a neighborhood of $\gamma^*$ the equation $\nabla_\xi \hat{f}_N(\gamma,\cdot)=0$ has a unique solution $\hat{\xi}_{iN}(\gamma)$ over $\xi\in \Xi^*_i$,
that $\hat{f}_N(\gamma,\xi)$ has a unique maximizer over $\xi\in \Xi^*_i$ at this $\hat{\xi}_{iN}(\gamma)$,
$\hat{\xi}_{iN}(\gamma^*) = \hat{\xi}_{iN}^*$,
and that $\hat{\xi}_{iN}(\gamma)$ is continuously differentiable.
Proceeding as in the proof of part (i), it follows that w.p.1 for $N$ large enough
$\hat{\varphi}_{iN}(\gamma)$ is twice continuously differentiable in a neighborhood of $\gamma^*$,
$ \nabla \hat{\varphi}_{iN}(\gamma)=\nabla_\gamma \hat{f}_{N}(\gamma,\hat\xi_{iN}(\gamma))$,
and
\begin{align*}
\nabla^2 \hat{\varphi}_{iN}(\gamma)
   &= \nabla_{\gamma\gamma}^2 \hat{f}_{N}(\gamma,\hat\xi_{iN}(\gamma)) \\
   & \quad-
   \bigl[\nabla_{\gamma\xi}^2 \hat{f}_{N}(\gamma,\hat\xi_{iN}(\gamma))\bigr ]
   \bigl [\nabla_{\xi\xi}^2 \hat{f}_{N}(\gamma,\hat\xi_{iN}(\gamma))\bigr ]^{-1}
   \bigl [\nabla_{\xi\gamma}^2 \hat{f}_{N}(\gamma,\hat\xi_{iN}(\gamma))\bigr ].
\end{align*}
This completes the proof of (ii).

(iii) Results \eqref{uniform-0}--\eqref{uniform-2}  follow from
\eqref{LLN0}--\eqref{LLN2} and the (nonsingularity) Assumption \ref{ass-sol-differentiability}(iii).
For \eqref{conv-dist-alt}, note that by
the proof of Lemma \ref{lem-secorder1}(ii),
$ \nabla \hat{\varphi}_{iN}(\gamma^*)=\nabla_\gamma \hat{f}_{N}(\gamma^*,\hat\xi_{iN}^*)$,
so that it suffices to show that
\[
\|\nabla_\gamma \hat{f}_{N}(\gamma^*,\hat\xi^*_{iN})-\nabla_\gamma \hat{f}_{N}(\gamma^*, \xi^*_{i})\|= o_p(N^{-1/2}).
\]
To this end, recall that by the proof of Lemma \ref{lem-secorder1}(ii),
$\hat\xi^*_{iN} \to \xi_i^*$ w.p.1 as $N\to\infty$.
We next show that
$\|\hat\xi^*_{iN}-\xi_i^*\|=O_p(N^{-1/2})$.
For this, note that Assumption \ref{ass-sol-differentiability}(iii) implies that the Hessian matrix
$\nabla_{\xi\xi}^2 f(\gamma^*,\xi_i^*)$ is negative definite and thus by second-order optimality conditions there exists a constant $c>0$ and a neighborhood $N_i$ of $\xi_i^*$ such that
\[
f(\gamma^*,\xi) \le f(\gamma^*,\xi_i^*) - c\| \xi-\xi_i^*\|^2
\]
for all $\xi \in N_i$. We may assume that $N_i \subset \Xi_i^*$.

Now set $B_{iN} := \{ \xi \in N_i : \| \xi-\xi_i^* \| \le  \| \hat\xi^*_{iN} - \xi_i^* \| \}$
and consider the Lipschitz constant $\kappa_N$ of the function
$\hat{f}_{N}(\gamma^*,\cdot) -  f(\gamma^*, \cdot)$
on $B_{iN}$.
For any $\xi \in B_{iN}$,
\begin{align*}
&\| \nabla_{\xi} \hat{f}_{N}(\gamma^*, \xi) - \nabla_{\xi} f(\gamma^*, \xi) \|    \\
&\qquad\le \| \nabla_{\xi} \hat{f}_{N}(\gamma^*, \xi_i^*) - \nabla_{\xi} f(\gamma^*, \xi_i^*) \|    \\
    &\qquad\quad + \| [\nabla_{\xi} \hat{f}_{N}(\gamma^*, \xi) - \nabla_{\xi} f(\gamma^*, \xi)] 
    - [\nabla_{\xi} \hat{f}_{N}(\gamma^*, \xi_i^*) - \nabla_{\xi} f(\gamma^*, \xi_i^*)] \|  . 
\end{align*}
By \eqref{CLT-f}, the first term is $O_p(N^{-1/2})$.
On the other hand, by the mean value theorem and the definition of $B_{iN}$,
w.p.1 for $N$ large enough
\[
\sup_{\xi \in B_{iN}}
\| [\nabla_{\xi} \hat{f}_{N}(\gamma^*, \xi) - \nabla_{\xi} f(\gamma^*, \xi)]
    - [\nabla_{\xi} \hat{f}_{N}(\gamma^*, \xi_i^*) - \nabla_{\xi} f(\gamma^*, \xi_i^*)] \|
   \le C_N
\| \hat\xi^*_{iN} - \xi_i^* \|,
\]
where
$C_N:= \sup_{\xi \in \Xi} \| \nabla^2_{\xi\xi} \hat{f}_{N}(\gamma^*, \xi) - \nabla^2_{\xi\xi} f(\gamma^*, \xi) \|$
and by \eqref{LLN2} $C_N = o_p(1)$.
Therefore
\[
\sup_{\xi \in B_{iN}} \| \nabla_{\xi} \hat{f}_{N}(\gamma^*, \xi) - \nabla_{\xi} f(\gamma^*, \xi) \|
\le o_p(1) \| \hat\xi^*_{iN} - \xi_i^* \| + O_p(N^{-1/2}),
\]
which provides an upper bound for the Lipschitz constants $\kappa_N$.

We now apply \citet[Proposition 4.32]{BS}, from which it follows that
\[  \|\hat\xi^*_{iN}-\xi_i^*\| \le o_p(1) \| \hat\xi^*_{iN} - \xi_i^* \| + O_p(N^{-1/2}) \]
and hence that
$\|\hat\xi^*_{iN}-\xi_i^*\| = O_p(N^{-1/2}) $.
Using this property, the mean value theorem to the function
$\nabla_\gamma \hat{f}_{N}(\gamma^*,\cdot)-\nabla_\gamma \hat{f}_{N}(\gamma^*, \cdot)$,
and \eqref{LLN2}, now imply that
\[
\|\nabla_\gamma \hat{f}_{N}(\gamma^*,\hat\xi^*_{iN})-\nabla_\gamma \hat{f}_{N}(\gamma^*, \xi^*_{i})\|= o_p(N^{-1/2}).
\]
Thus \eqref{conv-dist-alt} holds.

Finally, \eqref{bound-in-prob-1} follows from
\eqref{CLT-f}, \eqref{gradf}, and \eqref{conv-dist-alt}.
\end{proof}

\subsection{Optimality conditions for the population minimax problem}

We can now express the finite minimax problems of the previous subsection as nonlinear programming problems with a finite number of inequality constraints.
Specifically,
the population minimax problem \eqref{repr-2} can be equivalently formulated as the  problem (compare with \eqref{semi-f}):
\begin{equation}\label{equiv}
\min_{z\in \bbr, \gamma\in \Gamma} z \quad\text{s.t.}\quad \varphi_i(\gamma)-z\le 0,\;\; i=1,\ldots,k,
\end{equation}
and similarly for its empirical counterpart problem \eqref{minimax-sam}:
\begin{equation}\label{equiv-2}
\min_{z\in \bbr, \gamma\in \Gamma} z \quad\text{s.t.}\quad \hat{\varphi}_{iN}(\gamma)-z\le 0,\;\; i=1,\ldots,k.
\end{equation}
We next consider first- and second-order optimality conditions of problem \eqref{equiv}.

Consider the Lagrangian $L(z,\gamma,\lambda)=z+\sum_{i=1}^k\lambda_i (\varphi_i(\gamma)-z)$ of problem \eqref{equiv}.
As the appropriate Mangasarian-Fromovitz constraint qualification (see \citealp[p.~441]{BS}) is automatically satisfied, first-order optimality conditions imply
that there exist Lagrange multipliers $\lambda \in \bbr^k$ such that
\begin{equation}\label{lagrmul-2}
\lambda_i\ge 0,\; i=1,\ldots,k, \quad
\sum_{i=1}^{k}\lambda_i =1,
\quad\text{and}\quad
\sum_{i=1}^{k}\lambda_i \nabla  \varphi_i(\gamma^*)=0.
\end{equation}
These conditions can be viewed as a particular case of the optimality conditions \eqref{optcon}.
Denote by $\Lambda^*$ the set Lagrange multipliers $\lambda\in \bbr^k$ satisfying \eqref{lagrmul-2}.

Note that the directional derivative of the function $\varphi(\gamma) =\max \left \{\varphi_1(\gamma), \ldots ,\varphi_k(\gamma)\right\}$ is given by
\begin{equation}\label{phider}
\varphi'(\gamma^*,h)=\max_{1\le i\le k} h^\top \nabla  \varphi_i(\gamma^*),\;h\in \bbr^n.
\end{equation}
The first-order optimality condition \eqref{lagrmul-2} means that
$\varphi'(\gamma^*,h)\ge 0$ for all $h\in \bbr^n$.
Under condition \eqref{lagrmul-2},
the so-called  critical cone of problem \eqref{equiv} can be expressed as
\begin{equation}\label{critcone}
\C=\{h\in\mathbb{R}^n :  \varphi'(\gamma^*,h)\le 0\}.
\end{equation}
This cone represents directions where the first-order approximation does not give information about optimality of $\gamma^*$. Note that because of
$\varphi'(\gamma^*,h)\ge 0$, the inequality condition  in  \eqref{critcone} can be replaced by $\varphi'(\gamma^*,h)= 0$.

By \eqref{phider} the critical cone can equivalently be written   as
\[
\C=\{h\in\mathbb{R}^n : h^\top \nabla \varphi_i(\gamma^*)\le 0,\;i=1,\ldots,k\}.
\]
Alternatively, one can for any $\lambda\in \Lambda^*$ define the index sets
\[
\I_+ (\lambda):=\{i :\lambda_i>0,\; i=1,\ldots,k\}
\quad\text{and}\quad
\I_0 (\lambda):=\{i :\lambda_i=0,\; i=1,\ldots,k\}
\]
and express $\C$ as
\begin{equation}\label{critcone-2}
\C=\left \{h: h^\top \nabla \varphi_i(\gamma^*)= 0,\;i\in\I_+(\lambda),
\;\text{and}\;\;
h^\top \nabla \varphi_i(\gamma^*)\le 0 ,\;i\in\I_0(\lambda) \right\}
\end{equation}
(cf., \citealp[pp. 430, 443]{BS}).  Note that the right hand side of \eqref{critcone-2}  does not depend on the choice of $\lambda\in \Lambda^*$.

Turning to second-order conditions, note that
\[
 \nabla_{\gamma\gamma}^2 L(z,\gamma^*,\lambda)=\sum_{i=1}^k \lambda_i \nabla^2 \varphi_i(\gamma^*)
\]
does not depend on $z$.
Consider the following second-order condition:
\begin{equation}\label{secorder-1}
\sup_{\lambda\in \Lambda^*}
h^\top \nabla^2_{\gamma\gamma}L(z,\gamma^*,\lambda)h > 0, \;\forall h \in \C\setminus\{0\}.
\end{equation}
Under Assumptions \ref{ass-2nd-1}--\ref{ass-sol-differentiability},
condition \eqref{secorder-1} is necessary and sufficient for
the following quadratic growth condition to  hold.

\begin{definition}
It is said that
the {\em quadratic growth} condition holds for problem \eqref{repr-2} at $\gamma^*$ if there exist a constant   $c>0$ and a neighborhood $\V$ of $\gamma^*$  such  that
\begin{equation}\label{quadr}
 \varphi(\gamma)\ge \varphi(\gamma^*)+c\|\gamma-\gamma^*\|^2, \quad \forall\gamma\in \V\cap \Gamma.
\end{equation}
\end{definition}

We will also need a uniform version of the quadratic growth condition (cf., \citealp[Definition 5.16]{BS}).
Related to the population minimax problem \eqref{repr-2},
consider the more general problem
\[
  \min_{\gamma\in \Gamma} \phi(\gamma),
\]
where
\[ \phi(\gamma):=\max_{1\le i\le k} \phi_i(\gamma) \]
and $\phi_i:\bbr^n\to \bbr$, $i=1,\ldots,k$,  are twice   continuously  differentiable functions. Consider the $C^2$-norm  of twice continuously differentiable function $g:\bbr^n\to \bbr$;
\[
\|g\|_{C^2}:=\sup_{\gamma\in \cN}|g(\gamma) |+
\sup_{\gamma\in \cN}\|\nabla g(\gamma) \|+
\sup_{\gamma\in \cN}\|\nabla^2g(\gamma) \|,
\]
where $\cN$ is a compact neighborhood of $\gamma^*$.
The following definition formalizes a uniform variant of the quadratic growth condition for functions $\phi$ sufficiently close to $\varphi$.

\begin{definition}
 It is said that the {\em uniform quadratic growth} condition holds for problem \eqref{repr-2} at $\gamma^*$ if there exist constants $c>0$ and $\e>0$ and a neighborhood $\cN$ of $\gamma^*$ such that for $\| \phi_i-\varphi_i \|_{C^2}\le \e$, $i=1,\ldots,k$, and
 $\tilde{\gamma}\in \arg\min_{\gamma\in \cN\cap \Gamma}\phi(\gamma)$ it follows that
 \begin{equation}\label{unifromgrowth}
  \phi(\gamma)\ge \phi(\tilde{\gamma})+c \|\gamma-\tilde{\gamma}\|^2,\quad \forall \gamma \in \cN\cap \Gamma.
 \end{equation}
 \end{definition}

Of course the uniform quadratic growth condition implies the quadratic growth \eqref{quadr}.
In order to ensure the uniform quadratic growth condition we need a stronger form of second-order sufficient conditions.

 \begin{assumption}
\label{ass-interior-2}
The vectors $\nabla \varphi_1(\gamma^*)-\nabla \varphi_k(\gamma^*), \ldots ,\nabla \varphi_{k-1}(\gamma^*)-\nabla \varphi_k(\gamma^*)$ are linearly independent.
\end{assumption}

Assumption \ref{ass-interior-2}   means the   affine independence of vectors
$\nabla\varphi_1(\gamma^*), \ldots ,\nabla \varphi_k(\gamma^*)$,
that is,  if     $\sum_{i=1}^k \alpha_i \nabla \varphi_1(\gamma^*)=0$ and
$\sum_{i=1}^k \alpha_i=0$, then $\alpha_i=0$, $i=1,\ldots,k$.
It  implies that $k\le n+1$.
Under this assumption there exists a unique vector $\lambda^*$ of Lagrange multipliers, i.e., $\Lambda^*=\{\lambda^*\}$.

\begin{assumption}
\label{ass-interior-3}
The following condition holds
\begin{equation}\label{strsec}
 \begin{array}{l}
   h^\top\bigl [ \sum_{i=1}^k\lambda^*_i \nabla^2\varphi_i(\gamma^*)\bigr]  h>0 ,
   \quad \forall   h\in \LL\setminus\{0\},
    \end{array}
\end{equation}
where
\[
  \LL:=\left\{h\in \bbr^n:h^\top \nabla \varphi_i(\gamma^*) =0,\;i\in \I_+(\lambda^*)\right \}.
\]
\end{assumption}

Note that $\LL$ is a linear space containing the critical cone $\C$. Recall that by the first-order condition \eqref{lagrmul-2}, $\sum_{i\in\I_+(\lambda^*)}\lambda_i^* \nabla \varphi_i(\gamma^*)=0$.
Condition \eqref{strsec} can be viewed as a stringent form of second-order sufficient conditions. Of course this condition holds if the Hessian matrix $\sum_{i=1}^k\lambda^*_i \nabla^2\varphi_i(\gamma^*)$ is positive definite. Unlike the second-order condition \eqref{secorder-1}, condition \eqref{strsec} is stable with respect to small (with respect to the $C^2$-norm) perturbations of functions $\varphi_i$
(i.e., this condition is preserved under small, with respect to the $C^2$-norm, perturbations of the data;
cf., the discussion in  \citealp[p.374-5, eq. (4.368)]{BS}).

The preceding assumptions now ensure that the uniform quadratic growth condition holds.

 \begin{lemma}
 Suppose that Assumptions {\rm \ref{ass-2nd-1}--\ref{ass-interior-3}} hold.
 Then  the uniform quadratic growth condition  follows.
 \end{lemma}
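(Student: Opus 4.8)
The plan is to argue by contradiction and use a compactness argument to collapse the \emph{uniform} requirement \eqref{unifromgrowth} into a single limiting inequality, which is then ruled out by the stringent second-order condition \eqref{strsec}. Fix a compact convex neighborhood $\cN\subset\intt\Gamma$ of $\gamma^*$ (available by Assumption~\ref{ass-2nd-1}(ii)), and suppose uniform quadratic growth fails. Then for sequences $c_n\downarrow 0$ and $\e_n\downarrow 0$ there exist twice continuously differentiable $\phi_i^{(n)}$ with $\|\phi_i^{(n)}-\varphi_i\|_{C^2}\le\e_n$, minimizers $\tilde\gamma_n\in\arg\min_{\gamma\in\cN}\phi^{(n)}(\gamma)$ of $\phi^{(n)}=\max_i\phi_i^{(n)}$, and points $\gamma_n\in\cN$ with $\phi^{(n)}(\gamma_n)<\phi^{(n)}(\tilde\gamma_n)+c_n\|\gamma_n-\tilde\gamma_n\|^2$. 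Since $\|\phi_i^{(n)}-\varphi_i\|_{C^2}\to0$ forces $\phi^{(n)}\to\varphi$ uniformly on $\cN$, and $\gamma^*$ is the unique minimizer of $\varphi$ over $\Gamma$ by Assumption~\ref{ass-2nd-1}(i), a standard argument first yields $\tilde\gamma_n\to\gamma^*$; feeding this back into the violating inequality and using $\varphi(\hat\gamma)\ge\vv^*$ for any limit point $\hat\gamma$ of $\gamma_n$ gives also $\gamma_n\to\gamma^*$, so that $t_n:=\|\gamma_n-\tilde\gamma_n\|\to0$.

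Next I would establish the first-order stability of the multipliers. For $n$ large $\tilde\gamma_n\in\intt\cN$ is an interior minimizer of $\phi^{(n)}$, so the first-order condition for the finite minimax problem holds: writing $\hat I_n:=\{i:\phi_i^{(n)}(\tilde\gamma_n)=\phi^{(n)}(\tilde\gamma_n)\}$ for the active set, there are $\mu_i^{(n)}\ge0$ ($i\in\hat I_n$) summing to one with $\sum_{i\in\hat I_n}\mu_i^{(n)}\nabla\phi_i^{(n)}(\tilde\gamma_n)=0$, the exact analogue of \eqref{lagrmul-2}; extend $\mu^{(n)}$ by zero. By Assumption~\ref{ass-interior-2} the vectors $\nabla\varphi_i(\gamma^*)$ are affinely independent, hence so are $\nabla\phi_i^{(n)}(\tilde\gamma_n)$ for $n$ large, and the linear system determining $\mu^{(n)}$ has a unique, continuously varying solution; therefore $\mu^{(n)}\to\lambda^*$, the unique multiplier $\Lambda^*=\{\lambda^*\}$. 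In particular $\mu_i^{(n)}\to\lambda_i^*>0$ for $i\in\I_+(\lambda^*)$, which forces the active-set inclusion $\I_+(\lambda^*)\subseteq\hat I_n$ for all large $n$.

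The second-order step is where the construction pays off. I would introduce the smooth minorant $\ell_n(\gamma):=\sum_i\mu_i^{(n)}\phi_i^{(n)}(\gamma)\le\phi^{(n)}(\gamma)$ (a convex combination never exceeds the max). Using the active-set inclusion one checks $\ell_n(\tilde\gamma_n)=\phi^{(n)}(\tilde\gamma_n)$ and $\nabla\ell_n(\tilde\gamma_n)=0$, so a second-order Taylor expansion of $\ell_n$ about $\tilde\gamma_n$ together with the violating inequality gives $h_n^\top\nabla^2\ell_n(\zeta_n)h_n<2c_n$, where $h_n:=(\gamma_n-\tilde\gamma_n)/t_n$ and $\zeta_n$ lies on the segment. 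Passing to a subsequence with $h_n\to h$, $\|h\|=1$, and using $\mu^{(n)}\to\lambda^*$, $\zeta_n\to\gamma^*$, and $C^2$-convergence yields $h^\top\bigl[\sum_i\lambda_i^*\nabla^2\varphi_i(\gamma^*)\bigr]h\le0$. To invoke \eqref{strsec} I must show $h\in\LL$: for each $i\in\I_+(\lambda^*)\subseteq\hat I_n$, expanding $\phi_i^{(n)}(\gamma_n)-\phi_i^{(n)}(\tilde\gamma_n)\le\phi^{(n)}(\gamma_n)-\phi^{(n)}(\tilde\gamma_n)<c_nt_n^2$ to first order and letting $n\to\infty$ gives $\nabla\varphi_i(\gamma^*)^\top h\le0$; since $\sum_{i\in\I_+(\lambda^*)}\lambda_i^*\nabla\varphi_i(\gamma^*)=0$ with all $\lambda_i^*>0$, these nonpositive terms must each vanish, so $h\in\LL\setminus\{0\}$. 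This contradicts \eqref{strsec}, completing the proof.

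The hard part is not any single computation but securing the \emph{uniformity} of the growth constant, and this is exactly what the two delicate steps above deliver: weighting the minorant by the \emph{perturbed} multiplier $\mu^{(n)}$ (rather than $\lambda^*$) kills the first-order term cleanly but demands the active-set stability $\I_+(\lambda^*)\subseteq\hat I_n$, and the conclusion $h\in\LL$ is what lets the stringent condition posed over the whole linear space $\LL$ — not merely over the critical cone $\C$ — be applied in the limit. This is precisely the feature that makes \eqref{strsec} stable under $C^2$-perturbations, as noted after Assumption~\ref{ass-interior-3}, so I expect the affine-independence-driven multiplier stability and the identification $h\in\LL$ to be the crux of the argument.
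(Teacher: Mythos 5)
Your proof is correct and takes essentially the same route as the paper: a contradiction argument that extracts violating sequences, normalizes the differences to get $h_n\to h$ with $\|h\|=1$, shows the perturbed multipliers converge to the unique $\lambda^*$, passes a second-order expansion to the limit to obtain $h^\top\bigl[\sum_{i=1}^k\lambda^*_i\nabla^2\varphi_i(\gamma^*)\bigr]h\le 0$, identifies $h\in\LL\setminus\{0\}$, and contradicts \eqref{strsec}. Your smooth minorant $\ell_n=\sum_i\mu^{(n)}_i\phi^{(n)}_i$ is a cleaner packaging of the paper's ``max dominates the weighted sum'' step, and your explicit justifications (that $\tilde\gamma_n,\gamma_n\to\gamma^*$, that the multipliers converge via affine independence, and that $h\in\LL$ follows from the zero sum $\sum_{i\in\I_+(\lambda^*)}\lambda_i^*\nabla\varphi_i(\gamma^*)=0$ with each inner product nonpositive) make rigorous the steps the paper states more tersely.
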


\begin{proof}
We argue by a contradiction. Suppose that the uniform quadratic growth condition does not hold. Then for $i=1,\ldots,k$, there exists a sequence $\phi_{im}$,  $m=1,2,\ldots$, of functions converging in $C^2$-norm to $\varphi_i$ such that
  for $\phi_m(\cdot)=\max_{1\le i\le k} \phi_{im}(\cdot)$,
 $\tilde{\gamma}_m\in \arg\min_{\gamma\in \cN}\phi_m(\gamma)$ and some $ \gamma_m\in \cN$,  $ \gamma_m\ne \tilde{\gamma}_{m}$,  it follows that
 \begin{equation}\label{secineq}
  \phi_{m}(\gamma_m)\le \phi_{m}(\tilde{\gamma}_{m})+o(\|\gamma_m-\tilde{\gamma}_{m}\|^2),
 \end{equation}
 and $\tilde{\gamma}_m$
and $\gamma_m$ tend
to $\gamma^*$.
 Consider $h_m:=t_m^{-1}(\gamma_m-\tilde{\gamma}_m)$,
where $t_m:=\|\gamma_m-\tilde{\gamma}_m\|$
tends to $0$.
 By passing to a subsequence if necessary, we can assume that  $h_m$  converges to a point  $h$  with $\|h \|=1$.
For $m=1,2,\ldots$, consider the problem $\min_{\gamma\in\Gamma} \phi_m(\gamma)$ and let
 $\tilde{\lambda}_{im}$ denote the respective Lagrange multipliers.
 By the first-order optimality conditions we have that
 $\sum_{i=1}^k  \tilde{\lambda}_{im}=1$,
 $\sum_{i=1}^k  \tilde{\lambda}_{im}  \phi_{i m}(\tilde{\gamma}_{m})= \phi_{m}(\tilde{\gamma}_{m})$ and
$\sum_{i=1}^k  \tilde{\lambda}_{im} \nabla  \phi_{i m}(\tilde{\gamma}_{m})=0$.

Making use of a second-order Taylor expansion, we can write
\begin{equation}
\label{sequence}
\begin{split}
 \phi_{m}(\gamma_m)&= \phi_{m}(\tilde{\gamma}_{m}+t_m h_m) =
 \max\limits_{1\le i\le k} \phi_{im}(\tilde{\gamma}_{m}+t_m h_m)
 \\
 &= \max\limits_{1\le i\le k} 
 \bigl\{ \phi_{i m}(\tilde{\gamma}_{m})+
 t_m h_m^\top \nabla  \phi_{i m}(\tilde{\gamma}_{m})+\half t_m^2
  h_m^\top \nabla^2  \phi_{i m}(\tilde{\gamma}_{m})h_m
 \bigr\}
 + o(t^2_m)\\
 &\ge {\textstyle \sum\limits_{i=1}^k}  \tilde{\lambda}_{im} 
 \bigl\{ \phi_{i m}(\tilde{\gamma}_{m})+
 t_m h_m^\top \nabla  \phi_{i m}(\tilde{\gamma}_{m})+\half t_m^2
  h_m^\top \nabla^2  \phi_{i m}(\tilde{\gamma}_{m})h_m
 \bigr\}
 + o(t^2_m).
 \end{split}
\end{equation}
Consequently    by \eqref{secineq}, \eqref{sequence}, and the first-order optimality conditions we obtain
\[ \textstyle 0\ge
    h_m^\top\biggl[ \sum\limits_{i=1}^k  \tilde{\lambda}_{im}
   \nabla^2  \phi_{i m}(\tilde{\gamma}_{m})\biggr]h_m
 + o(1),
\]
and hence  by    passing to the limit
\begin{equation}\label{secineq-3}\textstyle
 h^\top\biggl [ \sum\limits_{i=1}^k\lambda^*_i \nabla^2\varphi_i(\gamma^*)\biggr]h\le 0.
\end{equation}

On the other hand, it also follows from   \eqref{secineq} and \eqref{sequence} that
$\phi_{m}(\tilde{\gamma}_{m}+t_m h_m) - \phi_{m}(\tilde{\gamma}_{m}) \le o(\| t_m h_m \|^2)$
so that
\[
\frac{\phi_{m}(\tilde{\gamma}_{m}+t_m h_m) - \phi_{m}(\tilde{\gamma}_{m})
- t_m h_m^\top \nabla  \phi_{m}(\tilde{\gamma}_{m})}{\| t_m h_m \|}
+ h_m^\top \nabla  \phi_{m}(\tilde{\gamma}_{m})
\le o(\| t_m h_m \|)
\]
and therefore
$$
\max_{1\le i\le k} h_m^\top \nabla  \phi_{i m}(\tilde{\gamma}_{m})+ o(1)\le 0,
$$
and hence by going to the limit that $\max_{1\le i\le k} h^\top \nabla  \varphi_i (\gamma^*)\le 0$. That is,
 \begin{equation}\label{secineq-2}
 \varphi'(\gamma^*,h)\le 0.
 \end{equation}
 Moreover, by continuity the Lagrange multipliers remain nonzero for small perturbations
and hence      for such  $h_m$ we have that
 $h_m^\top \nabla  \phi_{i m}(\tilde{\gamma}_{m})=0$, $i\in \I_+(\lambda^*)$.
 It follows that $h^\top \nabla  \varphi_{i}(\gamma^*)=0$, $i\in \I_+(\lambda^*)$. Together with \eqref{secineq-2} this implies that   $h\in \LL$.
 By \eqref{secineq-3} this gives the
  contradiction with the  second-order condition \eqref{strsec}.
\end{proof}

\subsection{Quadratic approximation of the sample minimax problem}

We next show that, when the uniform quadratic growth condition holds,
the solutions of the sample minimax problem \eqref{minimax-sam}
are close to the solutions of a particular quadratic approximation of the sample minimax problem. To this end, consider the problem
 \begin{equation}\label{approxprob}
  \min_{\gamma\in\V\cap \Gamma}  \psi_N(\gamma),
 \end{equation}
 where
 \[ \psi_N(\gamma):=\max_{1\le i\le k} \psi_{iN}(\gamma) \]
 with
 \[
 \psi_{iN}(\gamma):= (\gamma-\gamma^*)^\top\nabla \hat{\varphi}_{iN}(\gamma^*)+\half
  (\gamma-\gamma^*)^\top\nabla^2 \varphi_i(\gamma^*) (\gamma-\gamma^*).
  \]
Note that $\psi_{iN}(\gamma)$ comprises the first- and second-order terms of a Taylor approximation of $\hat{\varphi}_{iN}(\gamma) - \hat{\varphi}_{iN}(\gamma^*)$ at $\gamma^*$.
Let $\bar{\gamma}_N$ denote the optimal solution of problem \eqref{approxprob}.
In what follows, we establish a connection between $\hat{\gamma}_N$ and $\bar{\gamma}_N$.

As a preliminary step, we establish the rate of convergence of the solutions
$\hat{\gamma}_N$ and $\bar{\gamma}_N$.

\begin{lemma}
\label{lem-conv}
Suppose that Assumptions  {\rm \ref{ass-2nd-1}--\ref{ass-sol-differentiability}} and the quadratic growth  condition \eqref{quadr}  hold. Then
\begin{equation}\label{gamma_hat_rate}
\|\hat{\gamma}_N-\gamma^*\|=O_p(N^{-1/2})
\end{equation}
and
\[
\|\bar{\gamma}_N-\gamma^*\|=O_p(N^{-1/2}).
\]
\end{lemma}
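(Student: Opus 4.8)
The plan is to handle the two solution sequences separately, the point being that the quadratic model $\psi_N$ has \emph{no} constant discrepancy at $\gamma^*$, whereas the sample criterion $\hat\varphi_N$ does, and this difference dictates the method. As a common first step I would note that by the uniform convergence \eqref{uniform-0} together with $\Gamma^*=\{\gamma^*\}$ (Assumption \ref{ass-2nd-1}(i)) both $\hat\gamma_N\to\gamma^*$ and $\bar\gamma_N\to\gamma^*$ with probability one, so for $N$ large enough both minimizers lie in the interior of $\V\cap\cN$ and the local analysis below is legitimate.

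\emph{The quadratic approximation $\bar\gamma_N$.} Write $\bar\psi_i(\gamma):=(\gamma-\gamma^*)^\top\nabla\varphi_i(\gamma^*)+\half(\gamma-\gamma^*)^\top\nabla^2\varphi_i(\gamma^*)(\gamma-\gamma^*)$ for the population quadratic model and $\bar\psi:=\max_{1\le i\le k}\bar\psi_i$. First I would transfer quadratic growth \eqref{quadr} to $\bar\psi$: since each $\varphi_i$ is $C^2$ (Lemma \ref{lem-secorder1}(i)) and $\varphi_i(\gamma^*)=\vv^*$, a second-order Taylor expansion gives $\varphi_i(\gamma)-\vv^*=\bar\psi_i(\gamma)+o(\|\gamma-\gamma^*\|^2)$ uniformly over the finite index set, and taking maxima (using $|\max_i a_i-\max_i b_i|\le\max_i|a_i-b_i|$) yields $\bar\psi(\gamma)=\varphi(\gamma)-\vv^*+o(\|\gamma-\gamma^*\|^2)$, hence $\bar\psi(\gamma)\ge\half c\|\gamma-\gamma^*\|^2$ on a possibly smaller neighborhood. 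The crucial feature is that $\psi_{iN}(\gamma^*)=0$ for \emph{every} $i$, so $\psi_N(\gamma^*)=\bar\psi(\gamma^*)=0$; moreover $\psi_{iN}$ and $\bar\psi_i$ share the same quadratic term and differ only through the gradient at $\gamma^*$, so \eqref{bound-in-prob-1} gives $|\psi_N(\gamma)-\bar\psi(\gamma)|\le\max_i|(\gamma-\gamma^*)^\top(\nabla\hat\varphi_{iN}(\gamma^*)-\nabla\varphi_i(\gamma^*))|\le O_p(N^{-1/2})\|\gamma-\gamma^*\|$. Combining the growth of $\bar\psi$ at $\bar\gamma_N$ with the optimality $\psi_N(\bar\gamma_N)\le\psi_N(\gamma^*)=0$ then gives
\[ \half c\|\bar\gamma_N-\gamma^*\|^2\le\bar\psi(\bar\gamma_N)\le\psi_N(\bar\gamma_N)+O_p(N^{-1/2})\|\bar\gamma_N-\gamma^*\|\le O_p(N^{-1/2})\|\bar\gamma_N-\gamma^*\|, \]
and dividing yields $\|\bar\gamma_N-\gamma^*\|=O_p(N^{-1/2})$.

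\emph{The sample solution $\hat\gamma_N$.} Here the same device fails, and this is the main obstacle. The individual offsets $\hat\varphi_{iN}(\gamma^*)$ are each $O_p(N^{-1/2})$ but differ across $i$, so there is no cancellation analogous to $\psi_N(\gamma^*)=0$. Concretely, the natural inequality $\half c\|\hat\gamma_N-\gamma^*\|^2\le\varphi(\hat\gamma_N)-\vv^*\le(\hat\varphi_N-\varphi)(\gamma^*)-(\hat\varphi_N-\varphi)(\hat\gamma_N)$ only bounds the right-hand side by the \emph{range} of the offsets, which is $O_p(N^{-1/2})$ and independent of $\|\hat\gamma_N-\gamma^*\|$; combining this crudely with quadratic growth delivers merely $\|\hat\gamma_N-\gamma^*\|=O_p(N^{-1/4})$, and a mean-value/increment argument cannot repair it because the active index of $\hat\varphi_N$ and that of $\varphi$ need not coincide once one moves away from $\gamma^*$.

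To obtain the sharp rate I would instead pass to the nonlinear programming reformulation \eqref{equiv-2}, regarding it as a $C^{2}$ perturbation of the population program \eqref{equiv} whose constraint data at $(\vv^*,\gamma^*)$ — the values $\hat\varphi_{iN}(\gamma^*)-\varphi_i(\gamma^*)$ and the gradients $\nabla\hat\varphi_{iN}(\gamma^*)-\nabla\varphi_i(\gamma^*)$ — are both $O_p(N^{-1/2})$ (the former by the functional CLT of Section \ref{sec-value} and Lemma \ref{lem-secorder1}, the latter by \eqref{bound-in-prob-1}). Since the Mangasarian--Fromovitz constraint qualification holds automatically and quadratic growth \eqref{quadr} is, by the equivalence recorded after \eqref{secorder-1}, precisely the second-order growth condition, the Lipschitzian stability of local minimizers of canonically perturbed programs under second-order growth and MFCQ (\citealp[Chapter 4]{BS}; in the spirit of the earlier use of \citealp[Proposition 4.32]{BS}) yields $\|\hat\gamma_N-\gamma^*\|\le\kappa\cdot O_p(N^{-1/2})$, that is \eqref{gamma_hat_rate}. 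The reason the NLP route is needed is exactly that Lipschitz (rather than merely Hölder) dependence on the perturbation is available only because the perturbation is structured and smooth with first-order data of size $O_p(N^{-1/2})$, rather than an arbitrary $O_p(N^{-1/2})$ perturbation in sup-norm, which is all the elementary argument sees.
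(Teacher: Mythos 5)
Your treatment of $\bar{\gamma}_N$ is correct, and it is in fact safer than the paper's own argument. The paper handles both assertions the same way: it invokes \citet[Proposition 4.32]{BS} to get $\|\cdot-\gamma^*\|\le c^{-1}\kappa_N$, where $\kappa_N$ is the Lipschitz constant of the difference of the two max-functions on the ball $B_{r_N}=\{\gamma\in\Gamma:\|\gamma-\gamma^*\|\le r_N\}$, $r_N$ being the distance of the estimator to $\gamma^*$, and then asserts that $\kappa_N\le\max_i\sup_{B_{r_N}}\|\nabla\hat{\varphi}_{iN}(\gamma)-\nabla\varphi_i(\gamma)\|\le o_p(1)r_N+O_p(N^{-1/2})$ (and analogously for the pair $(\varphi,\psi_N)$). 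Your pointwise argument for $\bar{\gamma}_N$ — exploiting that every component difference $\psi_{iN}-\bar{\psi}_i$ vanishes at $\gamma^*$ and is linear with $O_p(N^{-1/2})$ slope, so that only the inequality $|\max_i a_i-\max_i b_i|\le\max_i|a_i-b_i|$ at the single point $\bar{\gamma}_N$ is needed — reaches the same conclusion without ever forming the Lipschitz constant of a difference of maxima, which, as you correctly sense, is the dangerous object.

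The genuine gap is in your second half: the theorem you invoke does not exist. Under MFCQ and second-order (quadratic) growth, the stability theory in \citet{BS} guarantees only H\"older stability of degree $1/2$ of optimal solutions under canonical perturbations; Lipschitz stability requires strictly stronger hypotheses — uniform quadratic growth, or uniqueness of the Lagrange multipliers together with a strong second-order condition, i.e.\ precisely Assumptions \ref{ass-interior-2}--\ref{ass-interior-3}, which this lemma does not assume. Nor does the ``structured, smooth, $O_p(N^{-1/2})$ first-order data'' nature of the perturbation rescue the step: the troublesome part of the perturbation is the constant offsets $\hat{\varphi}_{iN}(\gamma^*)-\varphi_i(\gamma^*)$, whose gradients are identically zero, and they alone can produce the $\sqrt{\epsilon}$ displacement.

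You should also know that the obstruction you diagnosed is exactly the flaw in the paper's own proof: the claim that $\max_i\sup_{B_{r_N}}\|\nabla\hat{\varphi}_{iN}-\nabla\varphi_i\|$ bounds the Lipschitz constant of $\hat{\varphi}_N-\varphi$ is false for differences of max-functions, because where the active index of $\hat{\varphi}_N$ differs from that of $\varphi$ the gradient of the difference is $\nabla\hat{\varphi}_{iN}-\nabla\varphi_j$ with $i\ne j$, which is not small. Concretely, take $n=1$, $k=2$, $\varphi_1(\gamma)=\gamma^2$, $\varphi_2(\gamma)=-\gamma^2$, with the randomness entering so that $\hat{\varphi}_{iN}=\varphi_i+c_{iN}$ for constants $c_{iN}=O_p(N^{-1/2})$ with nondegenerate joint limit (this is realizable in the paper's framework, e.g.\ $F(X,\gamma,\xi)=w(\xi)\gamma^2+a(\xi)+b(\xi)h_1(X)+c(\xi)h_2(X)$ with suitable smooth coefficient functions and independent mean-zero $h_1,h_2$). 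All hypotheses of the lemma hold, with $\Gamma^*=\{0\}$ and growth constant $c=1$, yet whenever $c_{2N}>c_{1N}$ — an event of probability tending to $1/2$ — the sample minimizers are $\hat{\gamma}_N=\pm\bigl((c_{2N}-c_{1N})/2\bigr)^{1/2}$, of exact order $N^{-1/4}$, so \eqref{gamma_hat_rate} fails; meanwhile the paper's bound would give $\kappa_N=0$. (Here $\nabla\varphi_1(\gamma^*)=\nabla\varphi_2(\gamma^*)=0$, so Assumption \ref{ass-interior-2} fails and the multipliers are non-unique, which is why this does not contradict Theorem \ref{th-gamma}.) So neither your NLP route nor the paper's route closes the first claim as stated: the $O_p(N^{-1/2})$ rate for $\hat{\gamma}_N$ genuinely requires the stronger assumptions (uniform quadratic growth via Assumptions \ref{ass-interior-2}--\ref{ass-interior-3}) that the paper introduces only afterwards. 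Your $O_p(N^{-1/4})$ diagnosis is not a failure to find the proof — it is the sharp rate under the lemma's stated hypotheses.
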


By Lemma \ref{lem-conv}  we have that   under Assumptions \ref{ass-2nd-1}--\ref{ass-sol-differentiability}, the second-order condition \eqref{secorder-1} implies that $\hat{\gamma}_N$ and $\bar{\gamma}_N$ both converge to $\gamma^*$ at the rate of $O_p(N^{-1/2})$.

\begin{proof}
First let us observe that  $\hat{\gamma}_N\to \gamma^*$ w.p.1 as $N\to \infty$ (cf., \citealp[Theorems 5.3 and 5.14]{SDR}).
For $r>0$ consider the set
\begin{equation}\label{neighb}
B_r:=\{\gamma\in \Gamma:\|\gamma-\gamma^*\|\le r\},
\end{equation}
  the sequence
\[
 r_N:=\|\hat{\gamma}_N-\gamma^*\|,
\]
and the corresponding sequence of (asymptotically shrinking) neighborhoods
$\V_N:=\V \cap B_{r_N}$.
Let $\kappa_N$ denote the Lipschitz constant of the function
$\hat{\varphi}_N(\cdot)-\varphi(\cdot)$ on $\V_N$.
Provided that $\hat{\gamma}_N\in\V$, it now follows from
the quadratic growth condition, Lipschitz continuity, and
\citet[Proposition 4.32]{BS} that
\begin{equation}\label{quad-2}
\|\hat{\gamma}_N-\gamma^*\|\le c^{-1}\kappa_N.
\end{equation}

We next provide an upper bound for $\kappa_N$.
Note that for any $i \in \{ 1,\ldots,k\}$ and any $\gamma\in B_{r_N}$
\begin{align*}
\|\nabla \hat{\varphi}_{iN}(\gamma)-\nabla  \varphi_{i}(\gamma)\|
& \le
\|\nabla  \hat{\varphi}_{iN}(\gamma^*)-\nabla  \varphi_{i}(\gamma^*)\|  \\
&\quad +
\| [\nabla \hat{\varphi}_{iN}(\gamma)-\nabla  \varphi_{i}(\gamma)]
   - [\nabla \hat{\varphi}_{iN}(\gamma^*)-\nabla  \varphi_{i}(\gamma^*)] \|.
\end{align*}
By \eqref{bound-in-prob-1}, the first term on the majorant side is $O_p(N^{-1/2})$.
On the other hand, by the mean value theorem and the definition of $B_{r_N}$,
w.p.1 for $N$ large enough
\[
\sup_{\gamma\in B_{r_N}}
\| [\nabla \hat{\varphi}_{iN}(\gamma)-\nabla  \varphi_{i}(\gamma)]
   - [\nabla \hat{\varphi}_{iN}(\gamma^*)-\nabla  \varphi_{i}(\gamma^*)] \|
   \le C_N
\|\hat{\gamma}_N-\gamma^*\|,
\]
where
$C_N:= \sup_{\gamma\in \cN} \|\nabla^2 \hat{\varphi}_{iN}(\gamma) - \nabla^2 \varphi_i(\gamma) \|$
and by \eqref{uniform-2} $C_N = o_p(1)$.
Therefore
\[
\max_{1\le i\le k}  \sup_{\gamma\in B_{r_N}}\|\nabla \hat{\varphi}_{iN}(\gamma)-\nabla  \varphi_{i}(\gamma)\|\le
o_p(1)  \|\hat{\gamma}_N-\gamma^*\|+O_p(N^{-1/2}),
\]
which provides an upper bound for the Lipschitz constants $\kappa_N$.
Together with  \eqref{quad-2} this implies \eqref{gamma_hat_rate}.

Now consider $\bar{\gamma}_N$ using similar arguments.
Specifically, for the Lipschitz continuity of $\varphi(\cdot) - \psi_N(\cdot)$ note that by the definition of $\psi_{iN}(\gamma)$
\[
 \nabla \varphi_{i}(\gamma)-\nabla  \psi_{iN}(\gamma) =
  \nabla \varphi_{i}(\gamma)-\nabla  \hat{\varphi}_{iN}(\gamma^*)
  -\nabla^2 \varphi_{i}(\gamma^*)(\gamma-\gamma^*)
\]
for $i = 1,\ldots,k$.
The right-hand side of this equation can be written as
\[
  \nabla \varphi_{i}(\gamma^*) - \nabla  \hat{\varphi}_{iN}(\gamma^*)
  + \nabla \varphi_{i}(\gamma)-\nabla \varphi_{i}(\gamma^*)
  -\nabla^2 \varphi_{i}(\gamma^*)(\gamma-\gamma^*)
\]
where using the mean value theorem
$\nabla \varphi_{i}(\gamma)-\nabla \varphi_{i}(\gamma^*)
= \nabla^2 \varphi_{i}(\gamma^{\bullet})(\gamma-\gamma^*)$ for some intermediate point $\gamma^{\bullet}$
between $\gamma$ and $\gamma^*$.
By \eqref{bound-in-prob-1} and the continuity of $\nabla^2 \varphi_{i}(\cdot)$ it follows that
$\sup_{\gamma\in B_{r_N}} \| \nabla \varphi_{i}(\gamma)-\nabla  \psi_{iN}(\gamma) \|$ is
$O_p(N^{-1/2}) + o_p(1)  \|\hat{\gamma}_N-\gamma^*\|$.
Similarly as above,
it now follows that $\|\bar{\gamma}_N-\gamma^*\|=O_p(N^{-1/2})$.
\end{proof}

The next proposition shows that $\hat{\gamma}_N$ and $\bar{\gamma}_N$
are asymptotically equivalent.

 \begin{proposition}
Suppose that Assumptions  {\rm \ref{ass-2nd-1}--\ref{ass-sol-differentiability}} and the uniform quadratic growth  condition   hold. Then
\[
\|\hat{\gamma}_N- \bar{\gamma}_N\|=o_p(N^{-1/2}).
\]
\end{proposition}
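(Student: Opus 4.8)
The plan is to reduce everything to a shrinking ball and then run a two-sided comparison based on the \emph{uniform} quadratic growth condition. By Lemma~\ref{lem-conv} both $\hat{\gamma}_N$ and $\bar{\gamma}_N$ lie within $O_p(N^{-1/2})$ of $\gamma^*$, so I would set $r_N:=\|\hat{\gamma}_N-\gamma^*\|+\|\bar{\gamma}_N-\gamma^*\|=O_p(N^{-1/2})$ and work on $B_{r_N}$ from \eqref{neighb}. Since $\gamma^*$ is interior to $\cN$ and $\V$, with probability tending to one $\hat{\gamma}_N$ and $\bar{\gamma}_N$ are the minimizers over $\cN\cap\Gamma$ of $\hat{\varphi}_N$ and of the second-order model $\psi_N=\max_{1\le i\le k}\psi_{iN}$, respectively. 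It is convenient (indeed necessary, see below) to read each model piece as anchored at its own value, $\psi_{iN}(\gamma)=\hat{\varphi}_{iN}(\gamma^*)+(\gamma-\gamma^*)^\top\nabla\hat{\varphi}_{iN}(\gamma^*)+\tfrac12(\gamma-\gamma^*)^\top\nabla^2\varphi_i(\gamma^*)(\gamma-\gamma^*)$, so that $\psi_N$ is genuinely the local second-order model of the max-function $\hat{\varphi}_N$.

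The key step is the matching estimate, which produces the gain over the crude bound. Writing $D_N:=\hat{\varphi}_N-\psi_N$, I would show
\[
\sup_{\gamma\in B_{r_N}}\bigl|D_N(\gamma)-D_N(\gamma^*)\bigr|=o_p(N^{-1}).
\]
Using $\hat{\varphi}_N=\max_i\hat{\varphi}_{iN}$, $\psi_N=\max_i\psi_{iN}$, and $|\max_i a_i-\max_i b_i|\le\max_i|a_i-b_i|$, it suffices to control the second-order Taylor remainder of each $\hat{\varphi}_{iN}$ at $\gamma^*$. By Lemma~\ref{lem-secorder1}(ii) the $\hat{\varphi}_{iN}$ are twice continuously differentiable near $\gamma^*$ w.p.\ tending to one, so the remainder has the form $\tfrac12(\gamma-\gamma^*)^\top[\nabla^2\hat{\varphi}_{iN}(\gamma^\bullet)-\nabla^2\varphi_i(\gamma^*)](\gamma-\gamma^*)$; the bracketed matrix is $o_p(1)$ uniformly on $B_{r_N}$ by the uniform Hessian convergence \eqref{uniform-2}, the continuity of $\nabla^2\varphi_i$, and $\gamma^\bullet\to\gamma^*$. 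Since $\|\gamma-\gamma^*\|^2=O_p(N^{-1})$ on $B_{r_N}$, each remainder, and hence the displayed oscillation, is $o_p(N^{-1})$.

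With the matching in hand, I would apply the uniform quadratic growth condition \eqref{unifromgrowth} twice with a common constant $c>0$: to $\hat{\varphi}_N$ at its minimizer $\hat{\gamma}_N$ with test point $\bar{\gamma}_N$, and to $\psi_N$ at its minimizer $\bar{\gamma}_N$ with test point $\hat{\gamma}_N$. Both objectives qualify, since $\hat{\varphi}_N$ is a $C^2$-perturbation of $\varphi$ within $\e$ by \eqref{uniform-0}--\eqref{uniform-2}, and each $\psi_{iN}$ is $C^2$-close to $\varphi_i$ on a small enough $\cN$ by \eqref{bound-in-prob-1} together with the continuity of $\nabla^2\varphi_i$. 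Adding the two growth inequalities and cancelling the objective values at the minimizers yields $2c\,\|\hat{\gamma}_N-\bar{\gamma}_N\|^2\le D_N(\bar{\gamma}_N)-D_N(\hat{\gamma}_N)$. Telescoping the right-hand side through $D_N(\gamma^*)$ bounds it by twice the oscillation estimated above, giving $2c\,\|\hat{\gamma}_N-\bar{\gamma}_N\|^2=o_p(N^{-1})$ and hence $\|\hat{\gamma}_N-\bar{\gamma}_N\|=o_p(N^{-1/2})$.

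The main obstacle is obtaining $o_p(N^{-1})$ rather than $O_p(N^{-1})$ in the matching step: the naive bound on $\sup_{B_{r_N}}|\hat{\varphi}_N-\psi_N|$ only delivers a rate of $O_p(N^{-1/4})$, so one must instead bound the \emph{oscillation} of $D_N$ and use that \eqref{uniform-2} is an $o_p(1)$ (not $O_p(1)$) statement. A second, subtler point to watch is that the model must retain the individual anchoring values $\hat{\varphi}_{iN}(\gamma^*)$: these enter the active-set tying conditions $\hat{\varphi}_{iN}(\hat{\gamma}_N)=\hat{\varphi}_{jN}(\hat{\gamma}_N)$ at the first-order level, and since the values $\hat{\varphi}_{iN}(\gamma^*)$ differ across $i$ by $O_p(N^{-1/2})$ while the gradient differences $\nabla\varphi_i(\gamma^*)-\nabla\varphi_j(\gamma^*)$ are $\Theta(1)$, dropping them would shift the model minimizer by $O_p(N^{-1/2})$ and degrade both the oscillation bound and the qualification for uniform growth. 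Finally, one must coordinate the neighborhoods $\cN$, $\V$, and $B_{r_N}$ so that the single growth constant $c$ from \eqref{unifromgrowth} serves both problems; this is precisely where the \emph{uniform} (rather than ordinary) quadratic growth condition is indispensable.
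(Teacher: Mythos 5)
Your proposal is essentially correct as mathematics, but it proves an \emph{amended} statement rather than the one printed above: you replace the paper's model pieces $\psi_{iN}$ by their anchored versions $\hat{\varphi}_{iN}(\gamma^*)+\psi_{iN}(\gamma)$, and that change is not cosmetic. Your parenthetical claim that the anchoring is necessary is in fact right: with the paper's own definition of $\psi_{iN}$ (and hence of $\bar{\gamma}_N$ in \eqref{approxprob}) the proposition is false. Take $n=2$, $k=2$, $\Gamma$ a ball around $\gamma^*=0$, and $F(X,\gamma,\xi_i)=(-1)^i\gamma_1+\|\gamma\|^2+X^{(i)}$, $i=1,2$, with $\bbe[X^{(i)}]=0$ and $\var(X^{(1)}-X^{(2)})>0$ (take $\Xi=\{\xi_1,\xi_2\}$, which the paper's remark on isolated points permits, or embed $\xi_1,\xi_2$ as interior points of an interval using bump functions in $\xi$). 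Then $\varphi_i(\gamma)=(-1)^i\gamma_1+\|\gamma\|^2$ and $\hat{\varphi}_{iN}=\varphi_i+\bar{X}^{(i)}_N$; each $\phi_i$ near $\varphi_i$ in $C^2$-norm is strongly convex, so the uniform quadratic growth condition holds with $c=1/2$, and Assumptions \ref{ass-2nd-1}--\ref{ass-sol-differentiability} hold as well. The sample minimizer sits at the kink, $\hat{\gamma}_N=\bigl((\bar{X}^{(1)}_N-\bar{X}^{(2)}_N)/2,\,0\bigr)$, while $\nabla\hat{\varphi}_{iN}(\gamma^*)=\nabla\varphi_i(\gamma^*)$ exactly, so the paper's $\psi_N$ is noise-free and $\bar{\gamma}_N=0$. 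Hence $N^{1/2}\|\hat{\gamma}_N-\bar{\gamma}_N\|$ has a nondegenerate limit. What matters are precisely the cross-index value differences $\hat{\varphi}_{iN}(\gamma^*)-\hat{\varphi}_{jN}(\gamma^*)=O_p(N^{-1/2})$, which your anchors retain and the paper discards (only a constant common to all $i$ could harmlessly be dropped).

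The comparison with the paper's own proof pinpoints where it goes wrong, and why your mechanism is the right one. The paper bounds the Lipschitz constant of each piece difference $\hat{\varphi}_{iN}-\psi_{iN}$ on $B_{r_N}$ by $o_p(N^{-1/2})$ (a valid step), then asserts the same bound for $\hat{\varphi}_N-\psi_N=\max_i\hat{\varphi}_{iN}-\max_i\psi_{iN}$ and feeds it into \citet[Proposition 4.32]{BS}. That inference is invalid: Lipschitz constants of differences do not pass through pointwise maxima when the pieces carry different constants. In the example above each $\hat{\varphi}_{iN}-\psi_{iN}=\bar{X}^{(i)}_N$ is constant, yet $\hat{\varphi}_N-\psi_N$ switches between $\bar{X}^{(1)}_N$ and $\bar{X}^{(2)}_N$ across the order-$N^{-1/2}$ strip between the two kinks, so its Lipschitz constant there is of order one. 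What \emph{does} pass through maxima is the sup-norm bound $|\max_i a_i-\max_i b_i|\le\max_i|a_i-b_i|$, which is exactly what your oscillation step uses: with anchors, each piece difference is a genuine Taylor remainder, uniformly $o_p(1)\,r_N^2=o_p(N^{-1})$ on $B_{r_N}$, hence so is $D_N$, and your two-sided application of uniform quadratic growth (at $\hat{\gamma}_N$ for $\hat{\varphi}_N$, at $\bar{\gamma}_N$ for the anchored model) closes the argument. Two caveats. First, Lemma \ref{lem-conv} concerns the unanchored $\bar{\gamma}_N$, and its proof contains the same difference-of-maxima step, so you should not cite it for the rate of your anchored minimizer; your own inequality repairs this, since it yields $2c\|\hat{\gamma}_N-\bar{\gamma}_N\|^2\le o_p(1)\bigl(\|\hat{\gamma}_N-\gamma^*\|+\|\hat{\gamma}_N-\bar{\gamma}_N\|\bigr)^2$ and thus bootstraps from the rate of $\hat{\gamma}_N$ alone. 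Second, the defect propagates downstream: in the counterexample $Z_{iN}=0$ in \eqref{zeta}, so Theorem \ref{th-gamma} would predict $\hat{\gamma}_N-\gamma^*=o_p(N^{-1/2})$, which is false there; with the corrected $\bar{\gamma}_N$, the sensitivity analysis behind \eqref{minsol-3a} must carry the value perturbations as constant terms in the constraints, and the limiting distribution changes accordingly.
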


 \begin{proof}
First consider the uniform quadratic growth condition with the functions $\phi=\hat{\varphi}_{N}$.
By \eqref{uniform-0}--\eqref{uniform-2} we have
\[
 \|\hat{\varphi}_{iN}-\varphi_i\|_{C^2}=o_p(1),\quad i=1,\ldots,k.
\]
Therefore by the uniform quadratic growth condition we have  that the quadratic  growth condition \eqref{unifromgrowth} holds w.p.1
at $\tilde{\gamma}=\hat{\gamma}_N$
with $\phi_i=\hat{\varphi}_{iN}$, $i=1,\ldots,k$, and $\phi=\hat{\varphi}_{N}$
for sufficiently large $N$.
That is,
\[
\hat{\varphi}_{N}(\gamma) \ge \hat{\varphi}_{N}(\hat{\gamma}_N)
+ c \| \gamma - \hat{\gamma}_N \|^2,\;\forall \gamma \in \cN\cap \Gamma.
\]

Next we provide an upper bound for the Lipschitz constant of $\hat{\varphi}_{N}-\psi_N$. By the definition of $\psi_{iN}(\gamma)$,
 \begin{equation}\label{gradappr}
 \nabla \hat{\varphi}_{iN}(\gamma)-\nabla  \psi_{iN}(\gamma)=
  \nabla \hat{\varphi}_{iN}(\gamma)-\nabla  \hat{\varphi}_{iN}(\gamma^*)
  -\nabla^2 \varphi_{i}(\gamma^*)(\gamma-\gamma^*).
 \end{equation}
As both $\|\hat{\gamma}_N-\gamma^*\|$ and $\|\bar{\gamma}_N-\gamma^*\|$ are $O_p(N^{-1/2})$,
we can restrict the analysis to the neighborhood  $B_{r_N}$, defined in \eqref{neighb}, with $r_N=O_p(N^{-1/2})$.
  By \eqref{gradappr} and the mean value theorem applied to $\nabla \hat{\varphi}_{iN}(\cdot)$ we have
\begin{align*}
& \sup\limits_{\gamma\in B_{r_N}}\| \nabla \hat{\varphi}_{iN}(\gamma)-\nabla  \psi_{iN}(\gamma)\| \\
&\qquad\le  r_N  \sup\limits_{\gamma\in B_{r_N}}\|\nabla^2 \hat{\varphi}_{iN}(\gamma)-\nabla^2  \varphi_{i}(\gamma^*)\| \\
&\qquad\le  r_N\sup\limits_{\gamma\in B_{r_N}}  \big(
 \|\nabla^2 \hat{\varphi}_{iN}(\gamma)-\nabla^2 \varphi_{i }(\gamma)\|
 +\|\nabla^2 \varphi_{i }(\gamma)-\nabla^2  \varphi_{i}(\gamma^*)\|\big).
\end{align*}
 By \eqref{uniform-2} and   since $\varphi_{i}$ are twice continuously differentiable  it follows that
 \[
  \sup\limits_{\gamma\in B_{r_N}}\| \nabla \hat{\varphi}_{iN}(\gamma)-\nabla  \psi_{iN}(\gamma)\|  \le  r_N\, o_p(1),
 \]
 and   since $r_N=O_p(N^{-1/2})$,
  \[
 \sup_{\gamma\in B_{r_N}}\| \nabla \hat{\varphi}_{iN}(\gamma)-\nabla  \psi_{iN}(\gamma)\|= o_p(N^{-1/2}).
 \]
 Therefore the Lipschitz constant of $\hat{\varphi}_{N}-\psi_N$, say $\tilde{\kappa}_N$, satisfies $\tilde{\kappa}_N = o_p(N^{-1/2})$.

  The proof now is completed by using \citet[Proposition 4.32]{BS},
  from which is follows that
$\|\hat{\gamma}_N- \bar{\gamma}_N\| \le c^{-1} \tilde{\kappa}_N = o_p(N^{-1/2})$.
\end{proof}

The above result reduces the asymptotic analysis of optimal solution $\hat{\gamma}_N$ of the original sample minimax problem \eqref{minimax-sam} to the asymptotic analysis of solution $\bar{\gamma}_N$ of problem \eqref{approxprob}.

\subsection{Asymptotic distribution of the minimax solutions}

We next consider asymptotics for the optimal solutions $\bar{\gamma}_N$ of problem \eqref{approxprob}.
To this end, we analyze a slightly more general problem.
 Let $\cN\subset \Gamma$  be a compact neighborhood of $\gamma^*$ and
  consider problem
  \begin{equation}\label{equivform}
 \begin{array}{cll}
 \min\limits_{\zeta\in \bbr, \gamma\in \cN }& \zeta\\
 {\rm s.t.}& (\gamma-\gamma^*)^\top\big(\nabla \varphi_{i}(\gamma^*)+v_{i}\big) +\half
  (\gamma-\gamma^*)^\top\nabla^2 \varphi_i(\gamma^*)(\gamma-\gamma^*) \le \zeta,\:\;\; i=1,\ldots,k,
 \end{array}
 \end{equation}
 parameterized by vectors $v_i\in \bbr^n$, $i=1,\ldots,k$.
 Denote by  $\bar{\gamma}(v)$ an optimal solution of problem \eqref{equivform} considered as a function of the $nk\times 1$ vector $v=[v_1,\ldots,v_k]$.
Note that for
  $v_{i}:=\nabla \hat{\varphi}_{iN}(\gamma^*)-\nabla \varphi_{i}(\gamma^*)$, problem \eqref{equivform} is equivalent to problem \eqref{approxprob}.
We next analyze the differentiability of $\bar{\gamma}(v)$ at $v=0$.

Let $\LL(\zeta,\gamma,\lambda,v)$ be the Lagrangian of problem \eqref{equivform} so that
\[
\LL(\zeta,\gamma,\lambda,v) = \zeta + \sum_{i=1}^k \lambda_i
[ (\gamma-\gamma^*)^\top\big(\nabla \varphi_{i}(\gamma^*)+v_{i}\big) +\half
  (\gamma-\gamma^*)^\top\nabla^2 \varphi_i(\gamma^*)(\gamma-\gamma^*) - \zeta ].
\]
Straightforward differentiation yields $\nabla_{\zeta} \LL(\zeta,\gamma,\lambda,v) = 1- \sum_{i=1}^k \lambda_i$,
\begin{align*}
\nabla_{\gamma} \LL(\zeta,\gamma,\lambda,v) &= \sum_{i=1}^k \lambda_i [ \big(\nabla \varphi_{i}(\gamma^*)+v_{i}\big)
   + \nabla^2 \varphi_i(\gamma^*)(\gamma-\gamma^*) ] ,  \\
\nabla_{\lambda_i} \LL(\zeta,\gamma,\lambda,v) &= (\gamma-\gamma^*)^\top\big(\nabla \varphi_{i}(\gamma^*)+v_{i}\big) +\half
  (\gamma-\gamma^*)^\top\nabla^2 \varphi_i(\gamma^*)(\gamma-\gamma^*) - \zeta.
\end{align*}
Note that for $v=0$ and $(\zeta,\gamma)=(0,\gamma^*)$, the Lagrange multipliers in \eqref{lagrmul-2} satisfy the first-order conditions.
Note that for $v=0$  the set of Lagrange multipliers of problem \eqref{equivform} is $\Lambda^*$, and
  that if the second-order sufficient condition \eqref{secorder-1} holds and the neighborhood $\cN$ is sufficiently small,
  then $\bar{\gamma}(0)=\gamma^*$.

Next note that
$\nabla^2_{\gamma\gamma}\LL(\zeta,\gamma,\lambda,v)
=\sum_{i=1}^k \lambda_i  \nabla^2 \varphi_i(\gamma^*)$
and
$\nabla^2_{\gamma v_i} \LL(\zeta,\gamma,\lambda,v) = \lambda_i$
so that
\[
   D^2_{\gamma v}\LL(\zeta,\gamma,\lambda,v)(d\gamma,d v)
   =  d\gamma^\top\big(\tsum_{i=1}^k\lambda_i dv_i\big).
\]
By \citet[Theorem  4.137]{BS} (see also \citealp[Theorem 5.53, eq. (5.110) and Remark 5.55]{BS})
it follows that under Assumptions \ref{ass-2nd-1}--\ref{ass-interior-3},
$\bar{\gamma}(v)$ is
Fr\'echet (and Hadamard)
directionally differentiable\footnote{Note that in the finite dimensional case, Hadamard directional differentiability implies Fr\'echet directional differentiability, and conversely  Fr\'echet directional differentiability together with continuity of the directional derivative with respect to the direction implies Hadamard directional differentiability.}
at $v=0$ with the directional derivative
$\bar{\bgamma}'(0,h)$, $h=[h_1,\ldots,h_k]$ ($nk\times 1$),
given by the optimal solution $\bar{\eta}=\bar{\eta}(h)$ of the following  problem
\begin{equation}\label{minsol-3alt}
\begin{array}{cll}
 \min\limits_{\zeta\in\bbr, \eta\in\bbr^n} & \eta^\top \big(\sum_{i=1}^k \lambda_i^* h_i\big)
 +\half   \eta^\top\big  [\sum_{i=1}^k \lambda^*_i  \nabla^2 \varphi_i(\gamma^*)\big ] \eta\\
 {\rm s.t.}& \eta^\top  \nabla \varphi_i(\gamma^*) -\zeta =0 ,\quad i\in \I_+(\lambda^*), \\
 & \eta^\top \nabla \varphi_i(\gamma^*) -\zeta    \le 0 ,\quad i\in \I_0(\lambda^*).
 \end{array}
 \end{equation}
Note that since $\sum_{i\in\I_+(\lambda^*) }^k \lambda_i^* \nabla \varphi_i(\gamma^*)=0$, the variable $\zeta$ in problem \eqref{minsol-3alt} must satisfy $\zeta=0$   and can thus be removed. Therefore equivalently  problem  \eqref{minsol-3alt} can be written as
\[
\begin{array}{cll}
 \min\limits_{\eta\in\bbr^n} & \eta^\top \big(\sum_{i=1}^k \lambda_i^* h_i\big)
 +\half   \eta^\top\big  [\sum_{i=1}^k \lambda^*_i  \nabla^2 \varphi_i(\gamma^*)\big ] \eta\\
 {\rm s.t.}& \eta^\top  \nabla \varphi_i(\gamma^*)   =0,\quad i\in \I_+(\lambda^*), \\
 & \eta^\top \nabla \varphi_i(\gamma^*)     \le 0,\quad i\in \I_0(\lambda^*).
 \end{array}
 \]

To convert the preceding results to asymptotics, 
now consider the $nk\times 1$ vector
 $Z_N:=[Z_{1N},\ldots,Z_{kN}]$ with
 \begin{equation}\label{zeta}
  Z_{iN}:=  \nabla \hat{\varphi}_{iN}(\gamma^*)-\nabla \varphi_{i}(\gamma^*), \quad i=1,\ldots,k.
 \end{equation}
Note that $\bar{\gamma}(Z_N) = \bar{\gamma}_N$.
Also by \eqref{conv-dist-alt}
\[ Z_{iN}
=\nabla_\gamma \hat{f}_{N}(\gamma^*,\xi_i^*) - \nabla_\gamma f(\gamma^*,\xi_i^*)
+o_p(N^{-1/2})
\]
and thus by the CLT we have that $N^{1/2}Z_N$ converges in distribution to
$\cZ$ that has the multivariate normal distribution $\N(0,\Sigma)$ with the covariance matrix $\Sigma$ given by  the covariance matrix of the $nk\times 1$ vector
$[ \nabla_\gamma F(X,\gamma^*,\xi_1^*) , \ldots , \nabla_\gamma F(X,\gamma^*,\xi_k^*) ] $.

By the derivations so far it holds that
\[ \hat{\gamma}_N - \gamma^*
    = \bar{\gamma}_N - \gamma^* + o_p(N^{-1/2})
    = \bar{\gamma}(Z_N) - \bar{\gamma}(0) + o_p(N^{-1/2}). \]
Finally, by applying the (finite dimensional) Delta Theorem to problem \eqref{approxprob} we obtain the main result of our paper, characterizing the asymptotic behavior of the optimal solutions $\hat{\gamma}_N$.

\begin{theorem}
 \label{th-gamma}
Suppose that Assumptions {\rm \ref{ass-2nd-1} -- \ref{ass-interior-3}} hold. Then
\begin{equation}
\label{optasym}
\hat{\gamma}_N-\gamma^*  =  \tilde{\eta} ( Z_{N} ) + o_p(N^{-1/2}),
\end{equation}
where $Z_N=[Z_{1N},\ldots,Z_{kN}]$ with $Z_{iN}$ defined in \eqref{zeta} and
$\tilde{\eta}(Z_N)$ is the optimal solution of problem
\begin{equation}\label{minsol-3a}
\begin{array}{cll}
 \min\limits_{\eta\in\bbr^n} &
 \eta^\top \big  (\sum_{i=1}^k \lambda^*_i Z_{iN} \big)
 +\half   \eta^\top  \bigl  [\sum_{i=1}^k \lambda^*_i  \nabla^2 \varphi_i(\gamma^*)\bigr  ] \eta\\
 {\rm s.t.}& \eta^\top \nabla \varphi_i(\gamma^*)  =0,\quad i\in \I_+(\lambda^*), \\
 &\eta^\top \nabla \varphi_i(\gamma^*)      \le 0,\quad i\in \I_0(\lambda^*).
\end{array}
\end{equation}
\end{theorem}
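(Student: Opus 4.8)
The plan is to assemble the ingredients developed in the preceding subsections into the stochastic expansion \eqref{optasym}; the substantive analytic work has already been done, so the proof of the theorem itself is essentially a synthesis. First I would invoke the asymptotic equivalence supplied by the preceding Proposition, namely $\hat\gamma_N-\gamma^*=\bar\gamma_N-\gamma^*+o_p(N^{-1/2})$. Since problem \eqref{equivform} with the parameter choice $v=Z_N$ coincides with problem \eqref{approxprob}, one has $\bar\gamma_N=\bar\gamma(Z_N)$; and since the second-order sufficient condition \eqref{secorder-1} holds and $\cN$ is taken sufficiently small, $\bar\gamma(0)=\gamma^*$. Substituting gives
\[
\hat\gamma_N-\gamma^*=\bar\gamma(Z_N)-\bar\gamma(0)+o_p(N^{-1/2}).
\]

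Next I would use the directional differentiability of $v\mapsto\bar\gamma(v)$ at $v=0$, which was established above through \citet[Theorem 4.137]{BS}: under Assumptions \ref{ass-2nd-1}--\ref{ass-interior-3} the map $\bar\gamma(\cdot)$ is Fr\'echet (and hence, by continuity of the derivative in the direction, Hadamard) directionally differentiable at $0$, with directional derivative $\bar\bgamma'(0,\cdot)$ equal to the solution map $\tilde\eta(\cdot)$ of the quadratic program \eqref{minsol-3a}. Combining \eqref{conv-dist-alt} with the CLT gives $N^{1/2}Z_N\dst\cZ$, so in particular $\|Z_N\|=O_p(N^{-1/2})$. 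Applying the finite-dimensional Delta Theorem (the vector-valued analogue of the Lemma reproduced in Section \ref{optval-asymptotics}) in its expansion form then yields
\[
\bar\gamma(Z_N)-\bar\gamma(0)=\tilde\eta(Z_N)+o_p(N^{-1/2}),
\]
and combining the two displays gives precisely \eqref{optasym}. A routine check is that the two remainders are genuinely of the same order: since $\tilde\eta$ is positively homogeneous of degree one (being a directional derivative) and $\|Z_N\|=O_p(N^{-1/2})$, the leading term $\tilde\eta(Z_N)$ is itself $O_p(N^{-1/2})$, so the two $o_p(N^{-1/2})$ error terms add to $o_p(N^{-1/2})$.

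The hard part is not in this final synthesis but in the apparatus it rests on, which has been built up earlier: verifying the strong second-order sufficient conditions required by \citet[Theorem 4.137]{BS}, identifying the derivative as the solution of the reduced quadratic program \eqref{minsol-3alt}/\eqref{minsol-3a}, and---crucially---establishing the $O_p(N^{-1/2})$ rates in Lemma \ref{lem-conv} together with the asymptotic equivalence $\|\hat\gamma_N-\bar\gamma_N\|=o_p(N^{-1/2})$. Within the proof of the theorem proper, the one point needing care is that the Delta Theorem must be invoked in its \emph{expansion} form (an almost-sure-order statement), not merely in its weak-convergence form; this is legitimate precisely because \citet[Theorem 4.137]{BS} delivers Fr\'echet directional differentiability, which (as the footnote records) is what licenses the first-order expansion with a remainder $o(\|Z_N\|)$ that is uniform in the direction.
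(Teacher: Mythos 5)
Your proposal is correct and follows essentially the same route as the paper: the paper's own argument (given in the paragraph preceding the theorem) likewise combines the asymptotic equivalence $\hat{\gamma}_N-\gamma^*=\bar{\gamma}(Z_N)-\bar{\gamma}(0)+o_p(N^{-1/2})$ from the preceding Proposition with the directional differentiability of $\bar{\gamma}(\cdot)$ at $v=0$ obtained from \citet[Theorem 4.137]{BS}, and then invokes the finite-dimensional Delta Theorem in expansion form. Your closing observation about positive homogeneity of $\tilde{\eta}(\cdot)$ and the role of Fr\'echet directional differentiability in justifying the $o_p(N^{-1/2})$ remainder matches the paper's footnote and its subsequent use of \eqref{optas-2}.
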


Problem \eqref{minsol-3a} is a quadratic programming problem.
Under Assumptions \ref{ass-2nd-1} -- \ref{ass-interior-3}
the solution mapping $\tilde{\eta}(\cdot)$  is well defined and is   positively homogeneous. Therefore it follows from \eqref{optasym} that
 \begin{equation}\label{optas-2}
  N^{1/2}(   \hat{\gamma}_N-\gamma^*)\,\dst \, \tilde{\eta}(\cZ).
   \end{equation}
In general, the mapping $\tilde{\eta}(\cdot)$ is not necessarily linear and therefore this limiting distribution is typically non-Gaussian and rather complicated.
Nevertheless, it describes the asymptotic behavior of the optimal solutions $\hat{\gamma}_N$.

However, under some additional conditions much more transparent results can be obtained. Consider the following condition.

\begin{definition}
It is said that the {\em strict complementarity condition} holds if
$\lambda_i^*>0$ for $i=1,\ldots,k$, i.e., $\I_+(\lambda^*)=\{1,\ldots,k\}$.
\end{definition}

Suppose that in addition to the assumptions of Theorem \ref{th-gamma}
also the strict complementarity condition holds so that $\I_0(\lambda^*)=\emptyset$.
In this case the feasibility constraints of problem \eqref{minsol-3a} involve only equalities and define the linear space $\LL$ of dimension $n-k+1$ (recall that $\sum_{i=1}^k \lambda_i^*  \nabla \varphi_i(\gamma^*)  =0$).
It turns out that in this case the
mapping $\tilde{\eta}(\cdot)$ is linear, and consequently
$N^{1/2}( \hat{\gamma}_N-\gamma^*)$ converges in distribution to multivariate normal with  (degenerate) distribution concentrated on the linear space $\LL$.
To provide some detail,
note that in this case the first-order optimality conditions for problem \eqref{minsol-3a} can be written in the form of equations
\begin{equation}\label{firstorder}
  Y_N+H\eta+A \alpha=0,\quad  A^\top \eta=0,
\end{equation}
where  $\alpha$ is the vector of Lagrange multipliers associated with problem \eqref{minsol-3a},
$Y_N:=\sum_{i=1}^k \lambda^*_i Z_{iN}$,
$H:=\sum_{i=1}^k \lambda^*_i  \nabla^2 \varphi_i(\gamma^*)$, and
$A=[\nabla \varphi_1(\gamma^*), \ldots ,\nabla \varphi_{k-1}(\gamma^*)]$
is an $n\times (k-1)$ matrix
(with $k=|\I_+(\lambda^*)|$;
recall that $\sum_{i\in\I_+(\lambda^*)}\lambda_i^* \nabla \varphi_i(\gamma^*)=0$, and  therefore the last constraint $\eta^\top  \nabla \varphi_k(\gamma^*)   =0$ of problem \eqref{minsol-3a}  can be removed).
Furthermore, equations \eqref{firstorder} can be written as
\begin{equation*}
 \left[
 \begin{array}{ccc}
   H & A   \\
  A^\top & 0
 \end{array}\right]
 \left[
 \begin{array}{ccc}
  \eta   \\
  \alpha
 \end{array}\right]=
 \left[
 \begin{array}{ccc}
   -Y_N  \\
   0
 \end{array}\right],
 \end{equation*}
where, by Assumptions \ref{ass-interior-2}  and \ref{ass-interior-3}, the matrix on the left-hand side is nonsingular.
 It follows that in the considered case,  $N^{1/2}(   \hat{\gamma}_N-\gamma^*)$ converges in distribution to normal with zero mean and covariance matrix given by the $n\times n$ upper block of matrix
 \[
  \left[
 \begin{array}{ccc}
   H & A   \\
  A^\top & 0
 \end{array}\right]^{-1}
 \left[
 \begin{array}{ccc}
  \Sigma &0 \\
  0& 0
 \end{array}\right]
   \left[
 \begin{array}{ccc}
   H & A   \\
  A^\top & 0
 \end{array}\right]^{-1}.
 \]

To further illustrate, consider the even more specific,
rather simple, case of $k=1$.
In this case the minimax problem has a single unique solution, that is, $\Gamma^* = \{ \gamma^*\}$ and
$\Xi^*(\gamma^*)=\{\xi^*\}$.
In this case the function $\varphi(\gamma)$ is given by $\varphi(\gamma) = \sup_{\xi\in \bar{\Xi}} f(\gamma,\xi)$, where $\bar{\Xi}$  is a compact neighborhood of $\xi^*$. The above matrix $A$ vanishes, $\lambda_1^*=1$,  and problem \eqref{minsol-3a} becomes an unconstrained problem. In this case  $N^{1/2}(   \hat{\gamma}_N-\gamma^*)$ converges in distribution to normal with zero mean and covariance matrix
$H^{-1}\Sigma H^{-1}$
with
\[
H= \nabla^2 \varphi(\gamma^*)= \nabla_{\gamma\gamma}^2 f(\gamma^*,\xi^*)-
   \bigl[\nabla_{\gamma\xi}^2 f(\gamma^*,\xi^*)\bigr ]
   \bigl [\nabla_{\xi\xi}^2 f(\gamma^*,\xi^*)\bigr ]^{-1}
   \bigl [\nabla_{\xi\gamma}^2 f(\gamma^*,\xi^*)\bigr ]
\]
and $\Sigma=\cov[\nabla_\gamma F(X,\gamma^*,\xi^*)]$.

Theorem \ref{th-gamma} gives the asymptotics of $\hat{\gamma}_N$ under a set of rather general assumptions.
We close this section with remarks about two situations in which some changes to this result are needed.

\begin{remark}
In Assumption \ref{ass-2nd-1} it was assumed that $\Xi^*(\gamma^*) = \{\xi_1^*, \ldots ,\xi^*_k\}$
with the points
$\xi_1^*, \ldots ,\xi^*_k$ being interior points of $\Xi$.
If, on the contrary, a point
$\xi^*_i$ is an {\em isolated} point of $\Xi$, then formula \eqref{hess-a} should be replaced by
$\nabla^2 \varphi_i(\gamma^*)=
    \nabla_{\gamma\gamma}^2 f(\gamma^*,\xi_i^*)$  throughout the derivations.
For instance, if the set $\Xi$ itself is finite,
then of course every point of $\Xi$ is isolated.
\end{remark}

\begin{remark}
Now consider the rather specific case in which
 {\em $k=n+1$ holds as an equality}
 (recall that Assumption \ref{ass-interior-2} implies that $k \leq n+1$).
 Moreover, suppose that
Assumptions  \ref{ass-2nd-1} -- \ref{ass-interior-2} are satisfied, and that the strict complementarity condition  holds.
In this case,
$\varphi'(\gamma^*,h)>0$ for all $h\ne 0$, the linear space
$\LL$ becomes $\{0\}$,
and the second-order condition \eqref{strsec} of Assumption \ref{ass-interior-3}
trivially holds. Furthermore, the  feasibility constraints of problem \eqref{minsol-3a} have unique solution $\eta=0$, and  thus  $\tilde{\eta}(\cdot)\equiv 0$.
Consequently, the right-hand side of \eqref{optas-2} degenerates into identical zero and the asymptotics of $\hat{\gamma}_N$ are qualitatively different.

To be more specific,
consider $\Xi_N:=\arg\max_{\xi\in \Xi} \hat{f}_{N}(\gamma^*,\xi)$. We have
\[
 \hat{\varphi}_N'(\gamma^*,h)=\max_{\xi\in \Xi_N}h^\top\nabla_\gamma \hat{f}_N(\gamma^*,\xi).
\]
For  $\nabla \hat{\varphi}_{iN}(\gamma)$, $i=1,\ldots,k$,
sufficiently close to their true counterparts,
the respective Lagrange multipliers of problem \eqref{equiv-2} remain positive and thus $\Xi_N=\Xi^*$. Consequently since $\varphi'(\gamma^*,h)>0$ for all $h\ne 0$, it follows that
\begin{equation}\label{empdir-2}
 \hat{\varphi}_N'(\gamma^*,h) >0,\;\forall h\in \bbr^n\setminus\{0\}.
\end{equation}
Suppose further that the set $\Gamma$ is convex and for almost every $X$ and all $\xi\in \Xi$ the function $F(X,\gamma,\xi)$ is convex in $\gamma\in \Gamma$.
By convexity of   $\hat{\varphi}_N(\cdot)$ and $\Gamma$,    it follows from \eqref{empdir-2} that $\gamma^*$ is the unique minimizer of the sample problem.
That is, under the above assumptions,
 the event $\{\hat{\gamma}_N=\gamma^*\}$  happens   w.p.1 for $N$  large enough
(that is, for any point  $\w$ of the respective probability space except in a set of measure zero, there is a number $\bar{N}_\w$ such that for any $N\ge \bar{N}_\w$ the event  holds).
\end{remark}

\section{Conclusion}

In this paper, we have considered asymptotics for
optimal values $\hat{\vv}_N$ and optimal solutions $\hat{\gamma}_N$
of general minimax problems.
Existing literature on the limiting distributions of $\hat{\vv}_N$ and $\hat{\gamma}_N$ is extremely scarce.
We took an approach different from the previous literature that is based on sensitivity analysis of parameterized mathematical optimization problems, and
showed in Sections 2 and 3 that these limiting distributions are in general highly non-Gaussian but reduce to normal distributions in simple cases.
Although our results are somewhat abstract, they open up the possibility of performing statistical inference in minimax problems.

\setstretch{1.05}

\bibliographystyle{elsarticle-harv} 
\bibliography{minimax}

\end{document}